\numberwithin{equation}{section}
\theoremstyle{plain}
\newtheorem{theorem}{Theorem}[section]
\newtheorem{corollary}[theorem]{Corollary}
\newtheorem{lemma}[theorem]{Lemma}
\newtheorem{remark}[theorem]{Remark}
\newtheorem{proposition}[theorem]{Proposition}
\newcommand{\ind}[1]{\mathds{1}_{#1}}
\title{Trade-off between predictive performance and FDR control for high-dimensional Gaussian model selection
}
\author{
  Perrine Lacroix \\
  Laboratoire de Mathématiques d’Orsay, CNRS, Université Paris-Saclay, Orsay, France \\
  Université Paris-Saclay, CNRS, INRAE, Université Evry, Institute of Plant Sciences Paris-Saclay (IPS2), \\
  91190, Gif sur Yvette, France \\
  Université Paris Cité, Institute of Plant Sciences Paris-Saclay (IPS2), 91190, Gif sur Yvette, France \\
  \url{perrine.lacroix@universite-paris-saclay.fr} \\
   \And
   Marie-Laure Martin \\
    Université Paris-Saclay, AgroParisTech, INRAE, UMR MIA Paris-Saclay, 91120, Palaiseau, France \\
    Université Paris-Saclay, CNRS, INRAE, Université Evry, Institute of Plant Sciences Paris-Saclay (IPS2), \\
    91190, Gif sur Yvette, France \\
    Université Paris Cité, Institute of Plant Sciences Paris-Saclay (IPS2),    91190, Gif sur Yvette, France \\
  \url{marie-laure.martin@inrae.fr} \\
}
\begin{document}
\maketitle

\begin{abstract}
In the context of high-dimensional Gaussian linear regression for ordered variables, we study the variable selection procedure via the minimization of the penalized least-squares criterion. We focus on model selection where the penalty function depends on an unknown multiplicative constant commonly calibrated for prediction. We propose a new proper calibration of this hyperparameter to simultaneously control predictive risk and false discovery rate. 
We obtain non-asymptotic bounds on the False Discovery Rate with respect to the hyperparameter and we provide an algorithm to calibrate it. This algorithm is based on quantities that can typically be observed in real data applications. The algorithm is validated in an extensive simulation study and is compared with several existing variable selection procedures. Finally, we study an extension of our approach to the case in which an ordering of the variables is not available.
\keywords{Ordered variable selection \and Prediction \and FDR \and High-dimension \and Gaussian regression \and Hyperparameter calibration}
\end{abstract}

\section{Introduction.}
\subsection{Problem statement.}

We consider the following high-dimensional univariate Gaussian linear regression model~:
\begin{equation}
    Y = X \beta^* + \varepsilon.
    \label{regression}
\end{equation}
The random response vector $Y=\Big((Y_i)_{\{1 \leq i \leq n\}}\Big)^T \in \mathbb{R}^n$ is regressed on $p$ deterministic vectors~: $X_1 = \Big((x_{i1})_{\{1 \leq i \leq n\}}\Big)^T,\cdots,X_p=\Big((x_{ip})_{\{1 \leq i \leq n\}}\Big)^T$. The design matrix of size $n \times p$ is denoted by $X=\left(X_1,\cdots,X_p\right)$. The noise $\varepsilon = \Big((\varepsilon_i)_{\{1 \leq i \leq n\}}\Big)^T$ is assumed to be Gaussian~: $\varepsilon \sim \mathcal{N}(0,\sigma^2 I_n)$ with $\sigma^2>0$. 
In the high-dimensional context, additional assumptions of regularity are required and we assume that $\beta^*$ is sparse, meaning that only a few coefficients are non-zero. 
In the following, a variable $X_j$ corresponding to a non-zero coefficient $\beta^*_j$ is called an active variable. Otherwise the variable is said to be non-active. \\
In this paper, we are interested in variable selection.
We refer the reader to \cite{hastie2009elements} and references therein. To the best of our knowledge, some variable selection procedures focus on the prediction of the response variable $Y$ through a control of the predictive risk. Others focus on limiting the number of selected non-active variables through a control of the False Discovery Rate. There also exists procedures where several cost functions are considered simultaneously. In the line of the latter, our goal is to identify a set of variables from a model selection procedure by limiting the selection of non-active variables while maintaining accurate prediction performances.

\subsection{Related works.}
\label{previously}
In a variable selection procedure, a cost function has to be defined.
The predictive risk (PR) and the False Discovery Rate (FDR) are the common used cost functions. 

\vspace{-0.2 cm}
\paragraph{The penalized methods to control the predictive risk.}
The penalization procedure balances goodness of fit and sparsity~: the smaller the penalty function, the better the fitting to the data but the higher the number of selected variables.
In a high-dimensional setup, the most popular method is the Lasso criterion \cite{tibshirani1996regression} where the estimator $\Hat{\beta}_\lambda$ of $\beta^*$ is the solution of~:
\begin{equation}
    \Hat{\beta}_\lambda = \underset{\beta \in \mathbb{R}^p}{\arg\min} \Big\{||Y-X\beta||_2^2 + \lambda |\beta|_1 \Big\},
    \label{lasso_eq}
\end{equation}
where $|\cdot|_1$ and $||\cdot||_2$ denote the $\ell_1$-norm and the euclidean norm of a vector respectively. 
The main challenge is to calibrate the hyperparameter $\lambda >0$.
If $\lambda$ is chosen to be proportional to $\sigma \sqrt{\frac{\log(p)}{n}}$, then the predictive risk is bounded \cite{bunea2007aggregation,bunea2007sparsity}. However, the noise being usually unknown, the choice of $\lambda$ remains tricky. Therefore, an alternative is to solve the Lasso criterion for a $\lambda$ within a reasonable interval by using subsamples \cite{meinshausen2010stability} or resamples \cite{bach2008bolasso}. The selected variables are then defined as the variables with the highest selection frequencies. Such alternative is no longer sensitive to the choice of $\lambda$ but the main challenge lies in the threshold on the frequency defining the selected variables. \\
In this paper, we consider a model selection procedure composed of three steps. The first step consists of solving the Lasso criterion on a relevant grid $\Lambda$. Each $\lambda \in \Lambda$ defines a variable subset to get a collection $\mathcal{M}$ of relevant subsets of variables with a wide range of sizes.
In the second step, the least-squares estimator onto each variable subset of $\mathcal{M}$ is calculated leading to a collection of estimators $\left(\Hat{\beta}_m\right)_{m \in \mathcal{M}}$. Lastly, the following penalized least-squares minimization is solved to select the best $m$ of $\mathcal{M}$~:
\begin{equation}
    \Hat{m} = \underset{m \in \mathcal{M}}{\arg\min} \Big\{ ||Y - X \Hat{\beta}_m||_2^2 + \text{pen}(D_m) \Big\},
    \label{penalization_LS}
\end{equation}
where $D_m$ is the dimension of the model $m$ and the function $\textit{pen}$ is a penalty function increasing with $D_m$. \\
Selecting $\Hat{m}$ from $\mathcal{M}$ by minimizing~(\ref{penalization_LS}) corresponds to selecting $\Hat{\lambda}$ from $\Lambda$ by minimizing~(\ref{lasso_eq}). Hence, the main challenge is the definition of $\textit{pen}$ that achieves an optimal trade-off between goodness of fit and sparsity within $\mathcal{M}$. Popular methods of model selection include $V-$fold cross-validation \cite{geisser1975predictive,arlot2010survey}, AIC \cite{akaike1973information}, Cp-Mallows \cite{mallows2000some}, BIC \cite{schwarz1978estimating} and eBIC \cite{chen2008extended}. For these penalty functions, the predictive risk is bounded when $\sigma^2$ is known and when the sample size $n$ tends to infinity. When $n$ is fixed, relatively small, and possibly smaller than the dimension $p$, a non-asymptotic point of view is preferable to get properties for all couples of $(n,p)$. In this direction, \cite{birge2007minimal} propose some penalty functions depending on the collection complexity such that $\Hat{m}$ guarantees non-asymptotic optimal control of the predictive risk. If the model collection is nested with a known variance, $\text{pen}(D_m) = 2 \sigma^2 D_m$ allows to achieve an optimal non-asymptotic control of the predictive risk \cite{akaike1973information}. If the model collection is fixed and large (for instance with an exponential growth with $D_m$) and if the variance is unknown, this optimal control is obtained with data-driven penalties \cite{birge2007minimal,baudry2012slope}. Lastly, if the model collection is data-dependent and if the variance $\sigma^2$ is unknown, the LinSelect penalty \cite{baraud2009gaussian,giraud2012high} guarantees an optimal control of the predictive risk. 

\vspace{-0.2 cm}
\paragraph{Multiple testing methods to control the False Discovery Rate.}
In the multiple testing procedure, the $p$ tests $H_0 = \{\beta^*_j = 0\}$ versus $H_1 = \{\beta^*_j \neq 0\}$ are performed independently to get a list of $p$-values.
Variables associated with a $p$-value smaller than a threshold are selected and the challenge is to find this threshold to obtain an upper bound on a function of the number of selected non-active variables. 
Several methods control the Family-Wise Error (FWER) which is the probability of selecting at least one non-active variable \cite{bonferroni1936teoria,simes1986improved}. However, these methods tend to be conservative, leading to a tiny set of selected variables. 
An alternative is to control the FDR which is the expectation of the proportion of non-active variables among the selected ones. The authors of \cite{benjamini1995controlling} first provide a threshold assuming independence of the $p$-values. 
This hypothesis is then relaxed in \cite{benjamini2001control,storey2004strong,romano2008control,leung2021zap}. \\
Instead of considering the $p$-values, the knockoff filter method \cite{barber2015controlling} introduces copies of the columns of $X$ constructed to
function as non-active variables to calibrate a threshold on test statistics.

\vspace{-0.2 cm}
\paragraph{The simultaneous control of several cost functions.}
Controlling PR or FDR is commonly performed independently in the literature and yield different sets of selected variables. For a PR control, selected variables aim at correctly predicting a new observation of $Y$, without guaranteeing that the set of selected variables does not contain non-active variables. Conversely, when the cost function is the FDR, the number of non-active variables is controlled at the price that some active variables are not selected.\\
Therefore, recent works have been proposed to combine prediction and FDR approaches to select all active variables without selecting non-active ones. For instance, \cite{zhou2009thresholding} propose a multi-step algorithm where a threshold procedure is applied to some Lasso estimators computed for specific values of $\lambda$.
In addition to prediction performances, a consistency property on the selected variable set is satisfied under some conditions on $X$. Another idea is the post-selection inference \cite{berk2013valid,lee2016exact} where the principle is to test the relevance of each selected variable by a model selection procedure. Valid confidence intervals are provided from conditional hypothesis tests for each model of the collection in addition to a PR control. Their work has been generalized by  \cite{hyun2018exact,chen2021powerful,duy2021more} and a review can be found in \cite{zhang2022post}. \\
In a completely different direction, \cite{genovese2002operating, genovese2004stochastic} propose to control the False Negative Rate (FNR) in addition to the FDR. A good FNR control ensures that most of the active variables are selected. So, minimizing a weighted sum of FDR and FNR provide a set of variables close to the set of active variables. 
However, improving FDR control deteriorates FNR control and vice versa. Hence, optimal controls of both criteria are impossible to achieve. \\
Some other papers propose to combine the FDR with the PR. Additional motivation to consider the PR is its behavior between the learning phase and the over-fitting phase. In the learning phase, the addition of a variable in the selected set drastically reduces the PR, whereas in the over-fitting phase, it increases proportionally to the noise level.
Firstly, in the standard multivariate normal mean problem with a known variance, \cite{abramovich2006adapting} propose a penalty function in the model selection procedure built from a multiple testing procedure. They obtain simultaneously sharp asymptotic bounds of the FDR and the PR. Then, \cite{bogdan2013statistical} propose the Sorted $\ell_1$ penalized estimator (SLOPE)
which is the minimizer of the Lasso criterion~(\ref{lasso_eq}) where $\lambda$ is replaced by a $p$-vector built from a multiple testing procedure. For the orthogonal design, their approach achieves a non-asymptotic control of the FDR and satisfies a minimum value of the total mean squared error with minimax convergence rate \cite{su2016slope}. This asymptotic convergence of the FDR has been generalized under a wide range of hypotheses, for instance, for a random design in \cite{kos2020asymptotic}.

\vspace{-0.2 cm}
\paragraph{{Ordered variable selection.}}
The ordered variable framework has attracted much attention recently, especially to address high-dimensional
problems.
In literature, a large class of methods exists dealing with variables having a natural ranking~: \cite{bickel2008regularized} for the regression framework, \cite{levina2008sparse} with the nested lasso penalty and \cite{huang2006covariance} for covariance matrix estimation. This assumption allows for drastically reducing the estimation complexity. 
We develop our theoretical framework under this assumption. It can be applied to data sets where the
assumption that an ordering of the variables is available a priori (e.g., from suitable forms of prior knowledge) is appropriate. \\
However, in most applications,
no canonical ranking of the variables is available and having a natural order on variables becomes a strong assumption.
In this case, alternatives consist of proposing a candidate order from random procedures and applying theoretical statistical methods on top of these random variable rankings. Several approaches have been proposed in the literature to provide such rankings. The most used ones are based either on a regularization path which is built with the Lasso type equation solving \cite{tibshirani1996regression} or on a decision tree 
\cite{li1984classification}. However, these approaches suffer from instability in that a small modification of the initial sample could radically change the variable order \cite{kalousis2007stability}. 
To circumvent this instability problem, one solution is to add a sampling 
procedure like the bootstrap \cite{breiman2001random}.
We adopt this point of view in this article to generalize our theoretical results to non-ordered variable selection.

\subsection{Main contributions.}
The originality of this paper is to obtain a control of the FDR in addition to the PR control in model selection through a convenient calibration of the penalty. \\
We assume variables are ranked according to their importance for the response variable $Y$; $X_1$ being the most important one, $X_2$ being the second one, $\cdots$, and $X_p$ being the least important one. In Gaussian linear regression, the order is given by the partial correlation between $Y$ and each $X_j$.
A natural model collection is the one containing the nested models respecting the variable order. This framework sounds restrictive but allows to derive theoretical expressions of the FDR in the considered model selection procedure. 
According to \cite{birge2007minimal}, all the penalty functions defined by~: 
\begin{equation}
 	\text{pen}(D_m) = K \sigma^2 D_m, \quad \forall m \in \mathcal{M},
 	\label{our_penalty_FDR}
\end{equation}
provide a non-asymptotic control of the PR for $K>1$ when variables are ranked.

\vspace{0.1 cm}
\textbf{Theoretical bounds on the FDR in model selection~:} Although the model selection procedure is built for a PR control, we obtain non-asymptotic lower and upper bounds on the FDR with respect to $K>0$ when $\sigma^2$ is known. We show that these bounds only involve some evaluations of cumulative distribution functions of the standard Gaussian and of some chi-squared variables. Whatever the noise level, FDR is always strictly positive. When $K$ tends to infinity, the FDR converges to $0$ with an exponential rate. So, a low value of the FDR is satisfactory as soon as the value of $K$ is not too large. 

\vspace{0.1 cm}
\textbf{Calibration of the hyperparameter $K$~:} The obtained theoretical bounds depend on the parameters $\beta^*$ and $\sigma^2$. We replace them with estimators to obtain completely data-dependent bounds on the FDR. 
Then, we propose a calibration of the hyperparameter $K$ to control a trade-off between FDR and PR. Our algorithm is validated on an extensive simulation study and is compared with several existing variable selection procedures.

\vspace{0.1 cm}
\textbf{Towards a non-ordering variable selection~:} 
From a practical point of view, a crucial assumption of this work is the knowledge of the variable ranking. We investigate empirically an extension in which the variable ordering is not given beforehand but estimated using a data-driven procedure to build
random model collections.

\subsection{Outline of the paper.}
The rest of the paper is organized as follows. Section~\ref{Model_notation} introduces the Gaussian linear regression model and some notations. Section~\ref{main_results} contains theoretical results. 
Since an increase of the hyperparameter $K$ leads to a decrease of the FDR, it motivates the study of the FDR function in model selection with respect to $K$. As the FDR has an intractable expression, bounds are obtained when the variable order and the variance are known. We establish an exponential convergence rate of the FDR function when $K$ tends to infinity. The special case of orthogonal design matrix is studied to illustrate the main results.  
In Section~\ref{trade_off_fixed_collection}, 
an algorithm is proposed to calibrate the hyperparameter $K$ in the penalty function to achieve a suitable trade-off between FDR and PR controls. 
It is based on simultaneous evaluations of the prediction performance and the FDR of the models, which are calculated from properly chosen estimators of $\sigma^2$ and $\beta^*$.
We then present a 
study to generalize our procedure  to non-ordered variable selection and we compare our algorithm with some existing variable selection procedures. 
Section~\ref{conclusion_FDR} contains a conclusion and a discussion of prospective work. In Section~\ref{proofs}, proofs of all the theoretical results are provided.
Lastly, the simulation protocol considered in this paper is described in Section~\ref{simulation_protocol}.

\section{Model and notations.}
\label{Model_notation}
Let us consider the Gaussian linear regression model given in~(\ref{regression}). 
We define $q = \min(n,p)$ and assume that $\left(X_1,\cdots,X_q\right)$ is a family of linearly independent vectors. We consider the deterministic and nested model collection of linear spaces~:
\begin{equation}
    \mathcal{M} = \Big\{m_0=\{0\}, m_1=\text{Span}(X_1), \cdots , m_q=\text{Span}(X_1,X_2,\cdots,X_q)\Big\}.
    \label{nested_collection}
\end{equation}
By construction, the true model $m^* = \text{Span}\big(X_j, \ j \ \text{s.t.} \ \beta^*_j \neq 0\big)$ belongs to $\mathcal{M}$. \\
For each $m \in \mathcal{M}$, $D_m$ is the dimension of $m$ and $\Hat{\beta}_m$ is the least-squares estimator onto $m$~: 
\begin{equation*}
     \Hat{\beta}_m = \underset{\{\beta, X\beta \in m\}}{\arg\min} \Big\{ ||Y-X\beta||_2^2 \Big\}.
\end{equation*}
With the definition of $q$ and properties on the family $\left(X_1,\cdots,X_q\right)$, $\Hat{\beta}_m$ is unique for each $m \in \mathcal{M}$. \\
For all $K>0$, we define the function $\text{crit}_K$ on $\mathcal{M}$ as~:
$$ \text{crit}_K(m) =  ||Y-X\Hat{\beta}_m||_2^2 + K \sigma^2 D_m, $$
and the selected model $\Hat{m}(K)$ by~:
\begin{equation}
    \Hat{m}(K) = \underset{m \in \mathcal{M}}{\arg\min} \Big\{ \text{crit}_K(m) \Big\}.
    \label{our_model_selection}
\end{equation}
We define $\text{PR}(m)$ the predictive risk associated to the model $m \in \mathcal{M}$ by~:
\begin{equation}
    \text{PR}(m) = \mathbb{E}\Big[||Y-X\Hat{\beta}_m||_2^2\Big],
    \label{theorical_rik}
\end{equation}
where $\mathbb{E}$ denotes the expectation under the distribution of $Y$ satisfying~(\ref{regression}). We define successively $FP(m)$ the number of variables contained in $m$ but not in $m^*$, the false discovery proportion by~:
$$ \text{FDP}(m) = \frac{\text{FP}(m)}{\max(D_m,1)};$$
and the False Discovery Rate by~: 
$$ \text{FDR}(m) = \mathbb{E}\Big[\text{FDP}(m)\Big],$$
where $\mathbb{E}$ still denotes the expectation under the distribution of Y satisfying~(\ref{regression}), so that $\text{FDR}(m)$ is deterministic even in the case where $m$ is random. 

Finally, 
the notation $\langle . , . \rangle$ denotes the canonical scalar product in $\mathbb{R}^n$, $\Pi_{\mathcal{X}}$ denotes the orthogonal projection function onto the space $\mathcal{X}$, $\Phi$ denotes the standard Gaussian cumulative distribution function and $F_{\chi^2(k)}$ is the cumulative distribution function of a chi-squared variable with $k$ degrees of freedom. By convention, an intersection or an union from indices $k$ to $\ell$ with $k>\ell$ are the intersection or the union over an empty set. In the same way, the set $\{k,\cdots,\ell\}$ is empty if $k>\ell$.

\section{Main results.}
\label{main_results}
In this section, the variance $\sigma^2$ is supposed to be known.
We first present intuitions that lead to study $\text{FDR}(\Hat{m}(K))$ in model selection. 
Non-asymptotic bounds on $\text{FDR}(\Hat{m}(K))$ are obtained in Theorem~\ref{theorem}, as well as asymptotic behaviors when $K$ tends to infinity in Corollary~\ref{asympt_K}. Finally, the particular case where $X$ is the orthogonal design matrix is studied to illustrate the main results.

\subsection{Intuitions.}
\label{key_ideas}
According to \cite{birge2007minimal}, the penalty function~(\ref{our_penalty_FDR}) satisfies a non-asymptotic control of the PR if and only if $K>1$. 
The constant $K=2$ allows to achieve the optimal asymptotic control of the PR. Hence, $2$ is commonly chosen in practice but other values of $K$ close to $2$ can give identical if not better non-asymptotic prediction performances. In this direction, we propose to calibrate the hyperparameter $K$ among those leading to prediction performances close to or better than for $K=2$ while satisfying a control of the FDR. The calibration is based on both $\text{PR}(\Hat{m}(K))$ and $\text{FDR}(\Hat{m}(K))$ functions with respect to $K$. \\
\begin{figure}[!h]
    \centering
    \includegraphics[width=1\linewidth]{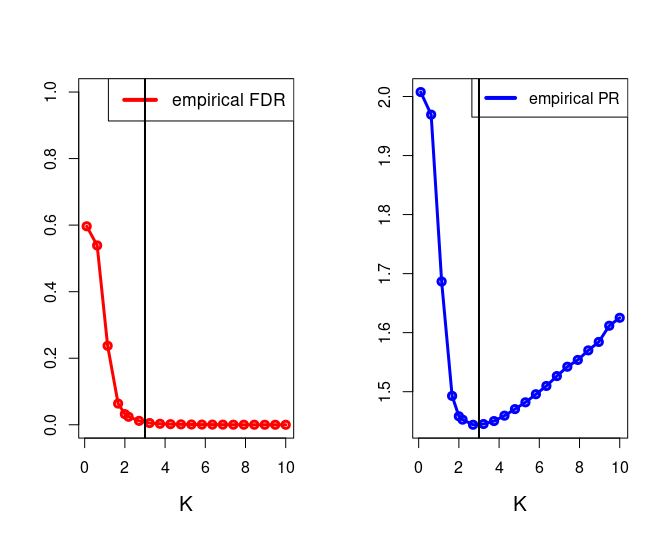}
    \caption{Curves of the empirical values of $\text{FDR}\big(\Hat{m}(K)\big)$ and $\text{PR}\big(\Hat{m}(K)\big)$ for the \textit{toy data set} described in Section~\ref{simulation_protocol}. The vertical lines correspond to $K=3$.}
    \label{FDR_RP10}
\end{figure}

\begin{table}[ht]
\centering
\begin{tabular}{|r|r|r|r|r|r|r|r|r|r|r|r|}
  \hline
   K  & 0.1 & 1 & 2 & 3 & 4 & 5 & 6 & 7 & 8 & 9 & 10 \\ 
  \hline
  \hline
  empirical & & & & & & & & & & & \\
    $\text{FDR}(\Hat{m}(K))$ & 0.80 & 0.38 & 0.05 & 0.01 & 0.00 & 0.00 & 0.00 & 0.00 & 0.00 & 0.00 & 0.00 \\ 
    \hline
  empirical & & & & & & & & & & & \\
  $\text{PR}(\Hat{m}(K))$ & 2.01 & 1.55 & 1.25 & 1.24 & 1.25 & 1.26 & 1.28 & 1.30 & 1.32 & 1.33 & 1.36 \\ 
   \hline
\end{tabular}
\caption{Values of the empirical estimators of $\text{PR}(\Hat{m}(K))$ and $\text{FDR}(\Hat{m}(K))$ according to $K$ for the \textit{toy data set} described in Section~\ref{simulation_protocol}.}
\label{table_intuition}
\end{table}
\noindent We propose an example to illustrate our point and intuition. 
In Figure~\ref{FDR_RP10}, we plot the empirical estimators of $\text{PR}(\Hat{m}(K))$ and $\text{FDR}(\Hat{m}(K))$ on a regular grid of positive values of $K$. Graphs are obtained from the \textit{toy data set} described in Section~\ref{simulation_protocol} and values are transferred to Table~\ref{table_intuition}. 
We observe that the empirical values of $\text{PR}(\Hat{m}(K))$ are kept low for $K\in [2,4]$ while the $\text{FDR}(\Hat{m}(K))$ function decreases with $K$ until $0$. Here, the choice $K=3$ is more judicious than $K=2$: it ensures a stronger and positive control of the FDR while satisfying similar prediction performances (a FDR of zero is not relevant as it means that no variable is selected). We also observe that while FDR decreases with $K$, PR increases from a certain value of $K \geq 2$. To control PR and FDR simultaneously, the constant $K$ must be close to $2$.\\ 
Increasing the constant $K$ to limit the non-active variable selection is known for the asymptotic point of view. Indeed, AIC and Cp-Mallows penalties \cite{akaike1973information,mallows2000some}, where $K$ equals $2$, give asymptotically the best set of variables for prediction performances; while BIC penalty \cite{schwarz1978estimating}, where $K$ is fixed to $\log(n)$, exactly recovers asymptotically the set of active variables. Obtaining the asymptotic properties of AIC, Cp-Mallows and BIC penalties simultaneously is impossible \cite{yang2005can}, but it suggests that a value of $K \in [2,\log(n)]$ would get reasonable (but not necessarily optimal) values for both PR and FDR in a non-asymptotic framework. In this way, we propose to study the function $\text{FDR}\big(\Hat{m}(K)\big)$ in the model selection procedure~(\ref{our_model_selection}) where the penalty function is~(\ref{our_penalty_FDR}) in the ordered variable setting.

\subsection{Bounds on the FDR in model selection.}

\subsubsection{FDR expression in model selection for ordered variables.}
Let us assume that $K>0$ and $\text{crit}_K$ is injective on $\mathcal{M}$.
If $D_m^* = q$, $\text{FDR}(\Hat{m}(K)) =~0$. Otherwise, the $\text{FDR}(\Hat{m}(K))$ is expressed within the model selection procedure as~: 
\begin{equation}
    \text{FDR}(\Hat{m}(K)) = \underset{r = D_{m^*}+1}{\overset{q}{\sum}} \frac{r-D_{m^*}}{r} \mathbb{P}\left(\left\{\underset{\underset{\ell \neq r}{\ell = 0}}{\overset{q}{\cap}} \{ \text{crit}_K(m_r) < \text{crit}_K(m_\ell)\}\right\}\right).
    \label{FDR_with_crit}
\end{equation} 

\vspace{0.7 cm}

A detailed proof of~(\ref{FDR_with_crit}) can be found in Subsection~\ref{proof_FDR_expression}. \\

\noindent By using the decomposition 
$$\left\{\underset{\ell = 0}{\overset{r-1}{\cap}} \{ \text{crit}_K(m_r) < \text{crit}_K(m_\ell)\}\right\} \bigcap \left\{\underset{\ell = r+1}{\overset{q}{\cap}} \{ \text{crit}_K(m_r) < \text{crit}_K(m_\ell)\}\right\}$$
of the term
$\underset{\underset{\ell \neq r}{\ell = 0}}{\overset{q}{\cap}} \{ \text{crit}_K(m_r) < \text{crit}_K(m_\ell)\} ,$
we obtain the following proposition~:  

\vspace{0.5 cm}
\begin{proposition}
Let us consider the ordered variable framework and the model collection~(\ref{nested_collection}) where $q = \min(n,p)$, $m^* \in \mathcal{M}$ and $D_m^*<q$. Let us assume that $\text{crit}_K$ is injective on $\mathcal{M}$. Let $\left(u_1,\cdots,u_n\right)$ be an orthonormal basis of $\mathbb{R}^n$ such that $\text{Span}(X_1,\cdots,X_j) = \text{Span}(u_1,\cdots,u_j), \ \forall j \in \{1,\cdots,q\}$.
\\
Then, $\forall K>0$, 
\begin{equation}
    \text{FDR}(\Hat{m}(K)) = \underset{r = D_{m^*}+1}{\overset{q}{\sum}} \frac{r-D_{m^*}}{r} \ P_r(K) \ Q_r(K,\beta^*,\sigma^2),
    \label{FDR_1}
\end{equation}

\vspace{0.3 cm}
where for each $r \in \{D_{m^*}+1, \cdots,q\}$,
\begin{equation}
    P_r(K) = \mathbb{P}\bigg( \underset{\ell = r+1}{\overset{q}{\cap}} \Big\{ \underset{k=r+1}{\overset{\ell}{\sum}} Z_k^2 < K(\ell-r) \Big\} \bigg), 
    \label{second_term}
\end{equation}
\hspace{4 cm} $\text{where} \ \ Z_k \overset{i.i.d.}{\sim} \mathcal{N}(0,1), \ \ \forall k \in \{r+1,\cdots,q\}$,
\begin{equation*}
    \text{and} \ Q_r(K,\beta^*,\sigma^2) = \mathbb{P}\bigg( \underset{\ell = 0}{\overset{r-1}{\cap}} \Big\{ \underset{k=\ell+1}{\overset{r}{\sum}} \langle Y, u_k \rangle^2 > K\sigma^2(r-\ell) \Big\} \bigg).
\end{equation*}
\label{Proposition}
\end{proposition}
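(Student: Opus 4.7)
The plan is to start from the expression~(\ref{FDR_with_crit}) and, for each fixed $r \in \{D_{m^*}+1,\ldots,q\}$, split the intersection event $\bigcap_{\ell \neq r}\{\text{crit}_K(m_r) < \text{crit}_K(m_\ell)\}$ into its two indicated halves ($\ell < r$ and $\ell > r$), rewrite each half in the orthonormal basis $(u_1,\ldots,u_n)$, and finally argue that the two halves are independent so the joint probability factorizes as $P_r(K) \cdot Q_r(K,\beta^*,\sigma^2)$.

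First I would extend $(u_1,\ldots,u_q)$ into an orthonormal basis of $\mathbb{R}^n$ and observe that, for every $j \in \{0,\ldots,q\}$, the projection $X\hat\beta_{m_j}$ equals $\sum_{k=1}^j \langle Y, u_k\rangle u_k$, whence
\begin{equation*}
\text{crit}_K(m_j) = \sum_{k=j+1}^{n} \langle Y, u_k\rangle^2 + K\sigma^2 j.
\end{equation*}
Subtracting two such expressions, the inequality $\text{crit}_K(m_r) < \text{crit}_K(m_\ell)$ becomes $\sum_{k=r+1}^{\ell}\langle Y, u_k\rangle^2 < K\sigma^2(\ell-r)$ when $\ell > r$, and $\sum_{k=\ell+1}^{r}\langle Y, u_k\rangle^2 > K\sigma^2(r-\ell)$ when $\ell < r$. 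This already delivers the $Q_r$ factor as it stands.

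To obtain the $P_r$ factor in its stated form I would use that $m^* = \text{Span}(u_1,\ldots,u_{D_{m^*}})$, so that $X\beta^* \in \text{Span}(u_1,\ldots,u_{D_{m^*}})$; therefore for every $k > D_{m^*}$ one has $\langle Y, u_k\rangle = \langle \varepsilon, u_k\rangle$. Since $\varepsilon \sim \mathcal{N}(0,\sigma^2 I_n)$ and the $u_k$ are orthonormal, the random variables $Z_k := \langle \varepsilon, u_k\rangle/\sigma$ for $k > D_{m^*}$ are i.i.d.\ $\mathcal{N}(0,1)$. Dividing the inequalities of the $\ell > r$ block by $\sigma^2$ then yields exactly the expression of $P_r(K)$ in~(\ref{second_term}).

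The last and most delicate step is the independence of the two halves. The $Q_r$ event is measurable with respect to $(\langle Y, u_k\rangle)_{1 \leq k \leq r}$, while the $P_r$ event is measurable with respect to $(\langle Y, u_k\rangle)_{r+1 \leq k \leq q}$. Because the vectors $u_1,\ldots,u_n$ are orthonormal and $Y$ is Gaussian, the coordinates $\langle Y, u_k\rangle$ are jointly Gaussian with diagonal covariance $\sigma^2 I_n$; hence coordinates in disjoint index sets are independent, and the two $\sigma$-algebras above are independent. The intersection probability in~(\ref{FDR_with_crit}) therefore factorizes as $P_r(K)\,Q_r(K,\beta^*,\sigma^2)$, and plugging this back into~(\ref{FDR_with_crit}) gives~(\ref{FDR_1}). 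The main obstacle is really this independence step: one must keep track of which coordinate indices appear in each half of the intersection so as to confirm that they are disjoint, which is precisely why the split at $k = r$ is natural.
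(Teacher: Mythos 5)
Your proposal is correct and follows essentially the same route as the paper: split the intersection at $\ell = r$, rewrite each criterion difference as a signed partial sum of $\langle Y,u_k\rangle^2$, note that $\langle Y,u_k\rangle = \langle\varepsilon,u_k\rangle$ for $k>D_{m^*}$ so the $\ell>r$ block is pivotal, and factor by independence of disjoint blocks of the Gaussian coordinates $\langle Y,u_k\rangle$. The only (harmless) difference is that you obtain the key identity directly from $\text{crit}_K(m_j)=\sum_{k=j+1}^{n}\langle Y,u_k\rangle^2+K\sigma^2 j$ via Parseval, whereas the paper routes it through two lemmas computing $\|X\hat\beta_{m_r}-X\hat\beta_{m_\ell}\|_2^2$.
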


A proof of Proposition~\ref{Proposition} can be found in Subsection~\ref{proof_FDR_expression}. \\ 

\subsubsection{General bounds.}
In~(\ref{FDR_1}), the $P_r(K)$ terms do not depend on data. Conversely, the $Q_r(K,\beta^*,\sigma^2)$ terms depend on the data. Thus, to understand the behavior of the $\text{FDR}$ function with respect to $\Hat{m}(K)$, we propose to bound the $Q_r(K,\beta^*,\sigma^2)$ terms in the following theorem~:

\begin{theorem}
Let us consider the ordered variable framework and the model collection~(\ref{nested_collection}) where $q = \min(n,p)$. Let us suppose that $m^* \in \mathcal{M}$ and $D_m^*<q$. The notation $\Phi$ stands for the standard Gaussian cumulative distribution function and $F_{\chi^2(k)}$ is the cumulative distribution function of a chi-squared variable with $k$ degrees of freedom. Let us assume that $\forall K>0, \text{crit}_K$ is injective on $\mathcal{M}$. Let $\left(u_1,\cdots,u_n\right)$ be an orthonormal basis of $\mathbb{R}^n$ such that $\text{Span}(X_1,\cdots,X_j) = \text{Span}(u_1,\cdots,u_j), \ \forall j \in \{1,\cdots,q\}$.
 \\
Then, $\forall K>0$, $\Hat{m}(K)$ satisfies~: 

\vspace{0.1 cm}
\begin{equation}
    b(K,\beta^*,\sigma^2) \leq \text{FDR}(\Hat{m}(K)) \leq B(K,\beta^*,\sigma^2),
    \label{FDR_bounds}
\end{equation}

\vspace{0.1 cm}
where $\Big[K \mapsto b(K,\beta^*,\sigma^2) \Big]$ and $\Big[K \mapsto B(K,\beta^*,\sigma^2)\Big]$ are two real-valued functions on $\mathbb{R}^{+}$ defined by~: 
\begin{align}
     &b(K,\beta^*,\sigma^2) = \sum\limits_{r = D_{m^*}+1}^q \Bigg(\frac{r-D_{m^*}}{r} P_r(K) \ \underline{f}_r(K,\beta^*,\sigma^2) \Bigg), \notag \\
    & B(K,\beta^*,\sigma^2) = \sum\limits_{r = D_{m^*}+1}^q \Bigg( \frac{r-D_{m^*}}{r} P_r(K) \ \overline{f}_r(K,\beta^*,\sigma^2)\Bigg),
    \label{small_large_term}
\end{align}
where for all $K>0$, $P_r(K)$ is defined in~(\ref{second_term}) and
\begin{enumerate}

    \item $\text{for each} \ r \in \{D_{m^*}+1, \cdots,q\} \ \text{and for all} \ \ell \in \{1, \cdots,r\}, \ \underline{f}_{\ell}(\cdot,\beta^*,\sigma^2)$ \ is 
    defined by~:
    \begin{align*}
        \underline{f}_1(K,\beta^*,\sigma^2) &= G_1 \\
        \underline{f}_{\ell}(K,\beta^*,\sigma^2) &= G_{\ell} + H_{\ell} \ \underline{f}_{\ell-1}(K,\beta^*,\sigma^2), \quad  \forall \ell \in \{2,\cdots,r\},
    \end{align*}
     $\hspace{0.2 cm} \text{with for} \ \ \ell \in \{1, \cdots, D_{m^*}\}$~: 
    \begin{align*}
        &G_{\ell} = 2 - \bigg( \Phi\big(\sqrt{\ell K} - \frac{\langle X \beta^*, u_\ell \rangle}{\sigma}\big) + \Phi\big(\sqrt{\ell K} + \frac{\langle X \beta^*, u_\ell \rangle}{\sigma} \big)\bigg),
    \end{align*}
    $\hspace{0.8 cm}$ for $\ell \in \{2, \cdots, D_{m^*}\}$~: 
    \begin{align*}
        &H_{\ell} = \Phi\Big(\sqrt{\ell K}-\frac{\langle X \beta^*, u_\ell \rangle}{\sigma}\Big) + \Phi\Big(\sqrt{\ell K}+ \frac{\langle X \beta^*, u_\ell \rangle}{\sigma}\Big) \\
        &\quad \quad \quad - \bigg(\Phi\Big(\sqrt{K}-\frac{\langle X \beta^*, u_\ell \rangle}{\sigma}\Big) + \Phi\Big(\sqrt{K}+\frac{\langle X \beta^*, u_\ell \rangle}{\sigma}\Big)\bigg), 
    \end{align*}
    $\hspace{0.9 cm} \text{for} \ \ \ell \in \{D_{m^*}+1, \cdots, r\}~:$
    \begin{align*}
         &G_{\ell} = 2 \bigg(1-\Phi\big(\sqrt{\ell K}\big)\bigg) \\
        &H_{\ell} = 2 \bigg( \Phi\big(\sqrt{\ell K}\big) - \Phi\big(\sqrt{K}\big)\bigg),
    \end{align*}
    \item $ \forall r \in \{D_{m^*}+1, \cdots,q\}, \ \overline{f}_{r}(\cdot,\beta^*,\sigma^2)$ is 
    defined by~: 
    \begin{align*}
        \overline{f}_r(K,\beta^*,\sigma^2) &= 1-\max\Bigg(\underset{\ell \in \{1,\cdots,r-D_{m^*}\}}{\max}\Big(F_{\chi^2(\ell)}(\ell K)\Big), \\
        & \hspace{0.2 cm} \underset{\ell \in \{r-D_{m^*}+1,\cdots,r\}}{\max}\bigg(F_{\chi^2(\ell)}\Big(\frac{\ell K}{2} - \underset{k=r-\ell+1}{\overset{D_{m^*}}{\sum}}\frac{\langle X\beta^*,u_k \rangle^2}{\sigma^2}\Big)\bigg)\Bigg).
    \end{align*}
\end{enumerate}
\label{theorem}
\end{theorem}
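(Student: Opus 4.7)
The plan is to invoke the exact identity from Proposition~\ref{Proposition},
\[
\text{FDR}(\hat{m}(K)) = \sum_{r = D_{m^*}+1}^{q} \frac{r - D_{m^*}}{r}\, P_r(K)\, Q_r(K,\beta^*,\sigma^2),
\]
and reduce the theorem to matching upper and lower bounds on each $Q_r(K,\beta^*,\sigma^2)$; since $P_r(K)\geq 0$ and the prefactors are non-negative, pointwise bounds on $Q_r$ transfer termwise to $\text{FDR}(\hat{m}(K))$. After the change of index $j = r - \ell$, setting $U_j = \sum_{k=r-j+1}^{r} \langle Y, u_k\rangle^2$, one has $Q_r = \mathbb{P}\big(\bigcap_{j=1}^{r} \{U_j > j K \sigma^2\}\big)$. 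The coordinates $\langle Y, u_k\rangle$ are independent $\mathcal{N}(\langle X\beta^*, u_k\rangle, \sigma^2)$, and since $X\beta^* \in \text{Span}(u_1,\ldots,u_{D_{m^*}})$, the mean $\langle X\beta^*, u_k\rangle$ vanishes for $k > D_{m^*}$. Hence $U_j/\sigma^2$ follows a (possibly non-central) chi-squared with $j$ degrees of freedom and non-centrality $\lambda_j = \sum_{k=r-j+1}^{D_{m^*}} \langle X\beta^*, u_k\rangle^2/\sigma^2$, which is zero as soon as $j \leq r - D_{m^*}$.

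For the upper bound I would use the trivial inequality $Q_r \leq \min_{j} \mathbb{P}(U_j > j K \sigma^2)$. When $j \leq r - D_{m^*}$ the marginal is a central chi-squared tail, giving $1 - F_{\chi^2(j)}(jK)$. When $j > r - D_{m^*}$ I would dominate the non-central chi-squared via Young's inequality $(Z+\mu)^2 \leq 2Z^2 + 2\mu^2$, which yields $\mathbb{P}(\chi^2(j,\lambda_j) > jK) \leq 1 - F_{\chi^2(j)}(jK/2 - \lambda_j)$. Combining both regimes and turning the minimum of survival probabilities into $1$ minus the maximum of CDF values reproduces exactly $\overline{f}_r(K,\beta^*,\sigma^2)$.

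The main obstacle, and the more delicate half, is the lower bound $\underline{f}_r$. The idea is to exhibit, for each $\ell \in \{1,\ldots,r\}$, a \emph{disjoint} sufficient event
\[
E_\ell = \big\{\langle Y, u_\ell\rangle^2 > \ell K \sigma^2\big\} \cap \bigcap_{k=\ell+1}^{r} \big\{K \sigma^2 < \langle Y, u_k\rangle^2 \leq k K \sigma^2\big\},
\]
which forces every constraint $\{U_j > j K \sigma^2\}$ to hold. The verification splits into two cases: if $r - j + 1 > \ell$ each of the $j$ summands is already above $K\sigma^2$, so $U_j > j K \sigma^2$; if $r - j + 1 \leq \ell$, then the single contribution $\langle Y, u_\ell\rangle^2 > \ell K \sigma^2$ together with the $r - \ell$ post-$\ell$ coordinates (each exceeding $K\sigma^2$) yields $U_j > rK\sigma^2 \geq jK\sigma^2$. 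Disjointness comes from the fact that $E_\ell$ pins $\langle Y, u_\ell\rangle^2$ into a range exclusive to that value of $\ell$; coordinate-wise independence then factorises $\mathbb{P}(E_\ell) = G_\ell \prod_{k=\ell+1}^{r} H_k$. A routine computation with the Gaussian cumulative distribution function, distinguishing the active case $\ell \leq D_{m^*}$ (where $\langle X\beta^*, u_\ell\rangle$ enters) from the non-active case $\ell > D_{m^*}$ (where it vanishes), identifies $G_\ell = \mathbb{P}(\langle Y, u_\ell\rangle^2 > \ell K\sigma^2)$ and $H_\ell = \mathbb{P}(K\sigma^2 < \langle Y, u_\ell\rangle^2 \leq \ell K\sigma^2)$ with the expressions in the statement. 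Summing the disjoint probabilities $\mathbb{P}(E_\ell)$ over $\ell$ telescopes to the announced recursion $\underline{f}_r = G_r + H_r \underline{f}_{r-1}$ with base $\underline{f}_1 = G_1$, and combining with the Proposition~\ref{Proposition} identity yields~\eqref{FDR_bounds}.
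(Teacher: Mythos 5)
Your proposal is correct and follows essentially the same route as the paper: the termwise reduction via Proposition~\ref{Proposition}, the trivial $\min$ bound plus $(a+b)^2\le 2a^2+2b^2$ for $\overline{f}_r$, and the disjoint family $E_\ell\cap\bigcap_{k=\ell+1}^{r}\{K\sigma^2<\langle Y,u_k\rangle^2\le kK\sigma^2\}$ for $\underline{f}_r$ is exactly the unrolled form of the paper's Lemma~\ref{Recurrence_lower}, which you verify directly instead of by induction. The case analysis for the inclusion, the disjointness argument, and the identification of $G_\ell$, $H_\ell$ with the stated Gaussian expressions all check out.
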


\vspace{0.3 cm}
A proof of Theorem~\ref{theorem} can be found in Subsection~\ref{proof_general_bounds}. \\
Hence, although the model selection procedure is built for prediction performances, bounds on the FDR are derived with respect to $\Hat{m}(K)$. 
Terms $\underline{f}_r(K,\beta^*,\sigma^2)$ and $\overline{f}_r(K,\beta^*,\sigma^2)$ only involve evaluations of cumulative distribution functions of the standard Gaussian and chi-squared variables. So, they have a fully explicit form which simplifies the understanding of the behavior of the FDR in model selection. 
However, they depend on the unknown parameters $\beta^*$ and $\sigma^2$ for which estimators are proposed in Section \ref{parameter_estimations}.

\subsubsection{Strictly positive $\text{FDR}$.}
The following corollary gives a lower bound on the FDR independent from $\sigma^2$.
\begin{corollary}
Under the assumptions and definitions of Theorem~\ref{theorem}, $\forall K>0$~:
\begin{equation*}
    \text{FDR}(\Hat{m}(K)) \geq  \sum\limits_{r = D_{m^*}+1}^q \Bigg(\frac{r-D_{m^*}}{r} P_r(K) \ \frac{2\sqrt{2}}{\sqrt{\pi}\Big(\sqrt{rK}+\sqrt{rK+4}\Big)} e^{-\frac{rK}{2}} \Bigg) \ > 0.
\end{equation*}
\label{positive_FDR}
\end{corollary}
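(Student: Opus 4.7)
The plan is to start from the lower bound $b(K,\beta^*,\sigma^2) \leq \text{FDR}(\Hat{m}(K))$ given in Theorem~\ref{theorem} and derive a simpler pointwise lower bound on $\underline{f}_r(K,\beta^*,\sigma^2)$ that does not depend on $\beta^*$ or $\sigma^2$.

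First I would argue that the recursion $\underline{f}_\ell = G_\ell + H_\ell \underline{f}_{\ell-1}$ implies $\underline{f}_r \geq G_r$. For this it suffices to check that $H_\ell \geq 0$ for every $\ell \in \{2,\cdots,r\}$. When $\ell \in \{D_{m^*}+1,\cdots,r\}$, this is immediate since $\sqrt{\ell K} \geq \sqrt{K}$ and $\Phi$ is nondecreasing. When $\ell \in \{2,\cdots,D_{m^*}\}$, writing $c = \langle X\beta^*, u_\ell\rangle/\sigma$, the same monotonicity of $\Phi$ applied to both shifted arguments $\sqrt{\ell K} \pm c \geq \sqrt{K} \pm c$ yields $H_\ell \geq 0$. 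Hence $\underline{f}_r(K,\beta^*,\sigma^2) \geq G_r = 2(1-\Phi(\sqrt{rK}))$ for every $r \in \{D_{m^*}+1,\cdots,q\}$, and this bound is free of $\beta^*$ and $\sigma^2$.

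Next I would invoke the classical lower bound on the standard Gaussian tail (sometimes attributed to Komatsu): for every $x > 0$,
\begin{equation*}
1-\Phi(x) \;\geq\; \frac{\sqrt{2}}{\sqrt{\pi}\bigl(x+\sqrt{x^2+4}\bigr)}\, e^{-x^2/2}.
\end{equation*}
This can be derived by showing that the function $x \mapsto e^{x^2/2}\int_x^\infty e^{-t^2/2}\,dt - \tfrac{2}{x+\sqrt{x^2+4}}$ is nonnegative, either via a standard differentiation argument on $[0,\infty)$ together with its limit at $+\infty$, or by integration by parts. Applying this with $x = \sqrt{rK}$ yields
\begin{equation*}
\underline{f}_r(K,\beta^*,\sigma^2) \;\geq\; 2\bigl(1-\Phi(\sqrt{rK})\bigr) \;\geq\; \frac{2\sqrt{2}}{\sqrt{\pi}\bigl(\sqrt{rK}+\sqrt{rK+4}\bigr)} e^{-rK/2}.
\end{equation*}

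Plugging this into the expression of $b(K,\beta^*,\sigma^2)$ in~(\ref{small_large_term}) gives exactly the claimed inequality. Strict positivity then follows because every factor in each summand is strictly positive: $(r-D_{m^*})/r > 0$ for $r \geq D_{m^*}+1$, the exponential $e^{-rK/2}$ and the reciprocal in $\sqrt{rK}+\sqrt{rK+4}$ are strictly positive, and $P_r(K) > 0$ for every $r$, with the convention $P_q(K) = 1$ (empty intersection) and, for $r < q$, $P_r(K)$ being the probability of an intersection of events of the form $\{\sum_{k=r+1}^\ell Z_k^2 < K(\ell-r)\}$, which contain a neighbourhood of the origin in the $Z_k$'s. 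The main step, and the only one that requires care, is the Gaussian tail inequality; the rest is a direct consequence of the monotonicity of $\Phi$ and of the recursion defining $\underline{f}_r$.
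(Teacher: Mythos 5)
Your proof is correct and follows essentially the same route as the paper: both lower-bound $\underline{f}_r$ by its leading term $G_r=2(1-\Phi(\sqrt{rK}))$ and then apply the same tail inequality (your Komatsu-type bound on $1-\Phi$ is exactly the paper's lower bound on $\operatorname{erfc}$ after the substitution $1-\Phi(x)=\tfrac{1}{2}\operatorname{erfc}(x/\sqrt{2})$), before concluding from the strict positivity of the $P_r(K)$. The only cosmetic differences are that you justify $H_\ell\geq 0$ via monotonicity of $\Phi$ (the paper gets it for free since $G_\ell$ and $H_\ell$ are probabilities of the events $E_\ell$ and $F_\ell$) and that you spell out why $P_r(K)>0$, which the paper merely asserts.
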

A proof of Corollary~\ref{positive_FDR} can be found in Subsection~\ref{proof_no_noise}. \\
From Corollary~\ref{positive_FDR}, $\text{FDR}(\Hat{m}(K))$ is always strictly positive, whatever the values of $K>0$ and $\sigma^2$.

\subsubsection{Asymptotic analysis.}
\label{Asymptotic_behaviors}
The following corollary gives the asymptotic behavior of the FDR function in model selection when $K$ tends to infinity.

\begin{corollary}
Under the assumptions and the definitions of Theorem~\ref{theorem}, the $\text{FDR}(\Hat{m}(K))$ function tends to $0$ when $K$ tends to infinity and satisfies $\forall \eta >0$,
\begin{equation}
    \text{FDR}(\Hat{m}(K)) = \underset{K \longrightarrow + \infty}{o} \Big(e^{-K(\frac{1}{2}-\eta)}\Big). \\
    \label{CV_rate_K_infinity}
\end{equation}

Furthermore, $\ \forall \eta>0, \ \exists C_\eta >0, \ \exists L_\eta >0, \ \forall K > L_\eta,$ we have~:
\begin{equation}
    \text{FDR}(\Hat{m}(K)) \geq C_\eta e^{-K\Big(\frac{D_{m^*}+1+2\eta}{2}\Big)}.
    \label{LB_rate_K_infinity}
\end{equation}

So, $\forall \varepsilon >0,$
\begin{align}
     -\frac{D_{m^*}}{2} - \frac{1}{2} - \varepsilon \ \leq \ &\underset{K \longrightarrow + \infty}{\lim\inf} \frac{1}{K}\log\Big(\text{FDR}(\Hat{m}(K))\Big) \notag \\ 
    & \underset{K \longrightarrow + \infty}{\lim\sup} \frac{1}{K}\log\Big(\text{FDR}(\Hat{m}(K))\Big)\ \leq \ - \frac{1}{2} + \varepsilon.
    \label{large_deviation}
\end{align}
\label{asympt_K}
\end{corollary}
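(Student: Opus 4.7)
The plan is to deduce both the upper and lower asymptotics directly from the bounds already established in Theorem~\ref{theorem} and Corollary~\ref{positive_FDR}, combined with standard tail estimates for the chi-squared and standard Gaussian distributions.

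For the upper bound~(\ref{CV_rate_K_infinity}), I would start from $\text{FDR}(\Hat m(K)) \le B(K,\beta^*,\sigma^2)$. In the definition of $\overline{f}_r(K,\beta^*,\sigma^2) = 1 - \max(\ldots)$, the first inner maximum always contains the value $F_{\chi^2(1)}(K)$ (take $\ell = 1$, which is legitimate as soon as $r-D_{m^*} \ge 1$), so for every $r \in \{D_{m^*}+1,\dots,q\}$ one gets $\overline{f}_r(K,\beta^*,\sigma^2) \le 1 - F_{\chi^2(1)}(K) = 2(1-\Phi(\sqrt K))$. Combining this with the trivial bounds $P_r(K) \le 1$ and $(r-D_{m^*})/r \le 1$ yields $B(K,\beta^*,\sigma^2) \le 2q\,(1-\Phi(\sqrt K))$. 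The Mills ratio estimate $1-\Phi(x) \sim \frac{1}{x\sqrt{2\pi}}e^{-x^2/2}$ as $x \to \infty$ then gives $B(K,\beta^*,\sigma^2) = O(K^{-1/2} e^{-K/2})$, which is indeed $o(e^{-K(\frac{1}{2}-\eta)})$ for every $\eta>0$.

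For the lower bound~(\ref{LB_rate_K_infinity}), I would apply Corollary~\ref{positive_FDR} and retain only the single term $r = D_{m^*}+1$, which gives the slowest exponential decay:
$$\text{FDR}(\Hat m(K)) \ge \frac{1}{D_{m^*}+1}\, P_{D_{m^*}+1}(K) \, \frac{2\sqrt 2}{\sqrt\pi \bigl(\sqrt{(D_{m^*}+1)K} + \sqrt{(D_{m^*}+1)K+4}\bigr)}\, e^{-(D_{m^*}+1)K/2}.$$
It remains to check that $P_{D_{m^*}+1}(K) \to 1$ as $K \to \infty$. By a union bound applied to the intersection defining $P_r(K)$ in~(\ref{second_term}), one has $P_r(K) \ge 1 - \sum_{\ell=r+1}^q \mathbb{P}(\chi^2(\ell-r) \ge K(\ell-r))$, and standard Chernoff/tail bounds for the chi-squared show that each of these probabilities is $O(K^{(\ell-r)/2-1} e^{-K(\ell-r)/2})$, hence $P_r(K) \to 1$ as $K \to \infty$. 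In particular, for every $\eta>0$ there exists $L_\eta$ such that $P_{D_{m^*}+1}(K) \ge 1/2$ for $K > L_\eta$; the $K^{-1/2}$ prefactor then gets absorbed into an arbitrarily small exponential slack $e^{-K\eta}$, delivering $\text{FDR}(\Hat m(K)) \ge C_\eta e^{-K(D_{m^*}+1+2\eta)/2}$.

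The $\liminf$/$\limsup$ bounds~(\ref{large_deviation}) follow by taking $\log$, dividing by $K$, and letting $K \to \infty$ in the two preceding estimates: the upper bound yields $\limsup_{K\to\infty} \frac{1}{K}\log\text{FDR}(\Hat m(K)) \le -\frac{1}{2}+\eta$, and the lower bound yields $\liminf_{K\to\infty} \frac{1}{K}\log\text{FDR}(\Hat m(K)) \ge -\frac{D_{m^*}+1}{2}-\eta$; choosing $\eta = \varepsilon$ gives the announced inequalities. The main technical obstacle, in my view, is quantifying the convergence $P_r(K) \to 1$ with a sufficiently good rate so that the polynomial prefactors do not degrade the exponent in~(\ref{LB_rate_K_infinity}); this requires carrying the polynomial factors in the chi-squared tail bounds carefully, but is otherwise routine.
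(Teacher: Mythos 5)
Your proposal is correct and follows essentially the same route as the paper: both arguments extract the dominant exponential rates from the bounds $b$ and $B$ of Theorem~\ref{theorem}, keep the single term $r=D_{m^*}+1$ together with the control $P_r(K)\to 1$ for the lower bound, and absorb the polynomial prefactors into an arbitrarily small exponential slack. The only (harmless) difference is in the upper bound, where you bound $\overline{f}_r$ by the single $\ell=1$ term and invoke the Mills ratio, whereas the paper applies the Laurent--Massart chi-squared inequality over all $\ell\in\{1,\dots,r-D_{m^*}\}$; both yield the required $o\big(e^{-K(\frac12-\eta)}\big)$.
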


\vspace{0.3 cm}
A proof of Corollary~\ref{asympt_K} can be found in Subsection~\ref{proof_asymptotic}.\\
From Equation~(\ref{CV_rate_K_infinity}), $\text{FDR}(\Hat{m}(K))$ tends to $0$ when $K$ tends to $+\infty$ with at least an exponential convergence rate and Equation~(\ref{LB_rate_K_infinity}) suggests that the exponential convergence rate is optimal. 

\begin{remark}
With no signal ($\beta^* = 0$ and $D_{m^*} = 0$), the asymptotic bounds in~(\ref{large_deviation}) are $- \frac{1}{2} - \varepsilon $ and $- \frac{1}{2} + \varepsilon $ and consequently~: 
$$\log\Big(\text{FDR}(\Hat{m}(K))\Big) \underset{K \rightarrow + \infty}{\sim} -\frac{1}{2} K.$$
\end{remark}

\begin{remark}
The asymptotic upper and lower bounds~(\ref{CV_rate_K_infinity}) and~(\ref{LB_rate_K_infinity}) are satisfied whatever the value of $\sigma^2>0$. It is possible to obtain the following sharpest asymptotic upper bound~: $\ \forall \Tilde{\eta} >0,$
\begin{equation}
     \text{FDR}(\Hat{m}(K)) = o\bigg(e^{- \Big(K\frac{(D_{m^*}+1-\Tilde{\eta})}{4} - \frac{1}{2\sigma^2} \underset{k=1}{\overset{D_{m^*}}{\sum}}\langle X\beta^*,u_k \rangle^2\Big)}\bigg)
    \label{CV_rate_K_infinity_more_complex}
\end{equation}
in the asymptotic regime where $K \longrightarrow + \infty$ and $\sigma \longrightarrow 0 \ \text{with} \ \frac{1}{\sigma} = \underset{\sigma \longrightarrow 0}{o}(\sqrt{K})$. 
The reader can find a proof in Subsection~\ref{proof_asymptotic}.
\label{remark_finest}
\end{remark}

\vspace{0.3 cm}

\vspace{0.3 cm}
\subsection{Illustrations of the main result in the orthogonal case.}
\label{Illustration_sub}
We propose to analyze the particular case where the design matrix $X$ is orthogonal since it leads to simplified forms for the FDR bounds that are easy to calculate.
\begin{corollary}[Application on the orthogonal case]
Under assumptions of Theorem~\ref{theorem} and by assuming that $\left(X_1,\cdots,X_q\right)$ are orthonormal with respect to $\langle . , . \rangle$, then, $\forall K>0$, $\text{FDR}(\Hat{m}(K))$ satisfies the same inequalities as (\ref{FDR_bounds}) where~: \\
for $\ell \in \{1, \cdots, D_{m^*}\}~:$
    $$G_{\ell} = 2 - \bigg( \Phi\Big(\sqrt{\ell K}-\frac{\beta^*_\ell}{\sigma}\Big) + \Phi\Big(\sqrt{\ell K}+\frac{\beta^*_\ell}{\sigma}\Big) \bigg),$$
for $\ell \in \{2, \cdots, D_{m^*}\}~:$
    $$H_{\ell} = \Phi\Big(\sqrt{\ell K}-\frac{\beta^*_\ell}{\sigma}\Big) + \Phi\Big(\sqrt{\ell K}+ \frac{\beta^*_\ell}{\sigma}\Big) - \bigg(\Phi\Big(\sqrt{K}-\frac{\beta^*_\ell}{\sigma}\Big) + \Phi\Big(\sqrt{K}+\frac{\beta^*_\ell}{\sigma}\Big)\bigg),$$
for all $r \in \{D_{m^*}+1,\cdots,q\}~:$ 
\begin{align*}
    \overline{f}_r(K,\beta^*,\sigma^2) = 1-\max\Bigg(&\underset{\ell \in \{1,\cdots,r-D_{m^*}\}}{\max}\Big(F_{\chi^2(\ell)}(\ell K)\Big), \\
    &\underset{\ell \in \{r-D_{m^*}+1,\cdots,r\}}{\max}\bigg(F_{\chi^2(\ell)}\Big(\frac{\ell K}{2} - \underset{k=r-\ell+1}{\overset{D_{m^*}}{\sum}}\frac{\beta^{*2}_k}{\sigma^2}\Big)\bigg)\Bigg),
\end{align*}
and all other terms are the same as those defined in Theorem~\ref{theorem}.
\label{corollary}
\end{corollary}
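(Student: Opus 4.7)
The plan is to apply Theorem~\ref{theorem} directly, the only work being to exhibit an orthonormal basis for which the inner products $\langle X\beta^*, u_\ell \rangle$ simplify to $\beta^*_\ell$.

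First, since by assumption $(X_1,\cdots,X_q)$ is already an orthonormal family in $\mathbb{R}^n$, I can extend it into an orthonormal basis $(u_1,\cdots,u_n)$ of $\mathbb{R}^n$ by Gram--Schmidt completion, choosing $u_j = X_j$ for every $j \in \{1,\cdots,q\}$. This basis automatically satisfies $\text{Span}(X_1,\cdots,X_j) = \text{Span}(u_1,\cdots,u_j)$ for every $j \in \{1,\cdots,q\}$, which is the hypothesis required by Theorem~\ref{theorem}.

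Next, I would compute $\langle X\beta^*, u_\ell \rangle$ for $\ell \in \{1,\cdots,D_{m^*}\}$. Writing $X\beta^* = \sum_{j=1}^p \beta^*_j X_j$ and exploiting the ordered variable framework, the sum is in fact restricted to $j \in \{1,\cdots,D_{m^*}\}$ since $\beta^*_j = 0$ for $j > D_{m^*}$. Because $D_{m^*} \leq q$ and all indices involved lie in $\{1,\cdots,q\}$, the orthonormality of $(X_1,\cdots,X_q)$ gives $\langle X_j, u_\ell \rangle = \langle X_j, X_\ell \rangle = \delta_{j\ell}$, so that
\begin{equation*}
\langle X\beta^*, u_\ell \rangle = \sum_{j=1}^{D_{m^*}} \beta^*_j \, \delta_{j\ell} = \beta^*_\ell.
\end{equation*}
The same identification immediately yields $\langle X\beta^*, u_k \rangle^2 / \sigma^2 = \beta^{*2}_k / \sigma^2$ for $k \in \{1,\cdots,D_{m^*}\}$, which is exactly the substitution needed inside the sum defining $\overline{f}_r$ (where only indices $k \in \{r-\ell+1,\cdots,D_{m^*}\}$ appear).

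Finally, I would plug these identifications into the expressions for $G_\ell$ and $H_\ell$ in the two regimes $\ell \in \{1,\cdots,D_{m^*}\}$ and $\ell \in \{D_{m^*}+1,\cdots,r\}$, as well as into $\overline{f}_r(K,\beta^*,\sigma^2)$, of Theorem~\ref{theorem}. In the regime $\ell > D_{m^*}$ nothing changes because $\beta^*_\ell = 0$ (and $\langle X\beta^*, u_\ell \rangle = 0$ as well under the orthogonality hypothesis), so the $G_\ell$ and $H_\ell$ formulas for that range stay identical. The terms $P_r(K)$ and the skeleton of the recursion defining $\underline{f}_r$ are unchanged since they do not depend on $X$. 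There is no real obstacle: the corollary is an immediate specialization of Theorem~\ref{theorem}, and the only care required is the correct choice of the completed orthonormal basis so that the inner products collapse to the coordinates of $\beta^*$.
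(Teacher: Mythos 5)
Your proof is correct and follows essentially the same route as the paper: take $u_j = X_j$ for $j \in \{1,\cdots,q\}$, complete to an orthonormal basis, and observe that $\langle X\beta^*, u_\ell\rangle = \beta^*_\ell$ since $\beta^*_j = 0$ for $j > D_{m^*} \leq q$. Your write-up is in fact slightly more explicit than the paper's about why the spanning condition holds and why the sum collapses to $\beta^*_\ell$.
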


\vspace{0.3 cm}
A proof of Corollary~\ref{corollary} can be found in Subsection~\ref{proof_orthogonal}. \\
\begin{figure}[h!]
\begin{subfigure}{0.49\linewidth}
    \centering
    \includegraphics[width=1\linewidth]{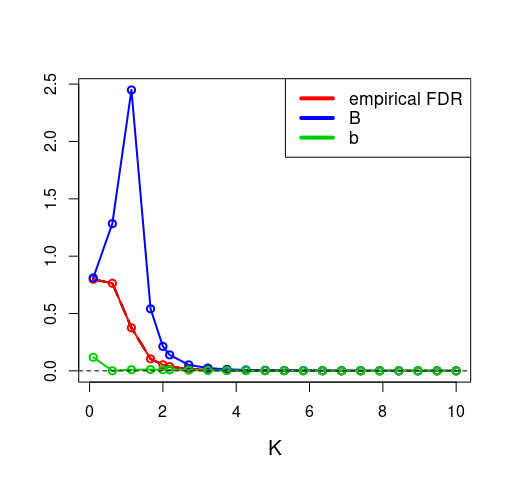}
\end{subfigure}
\begin{subfigure}{0.49\linewidth}
    \centering
    \includegraphics[width=1\linewidth]{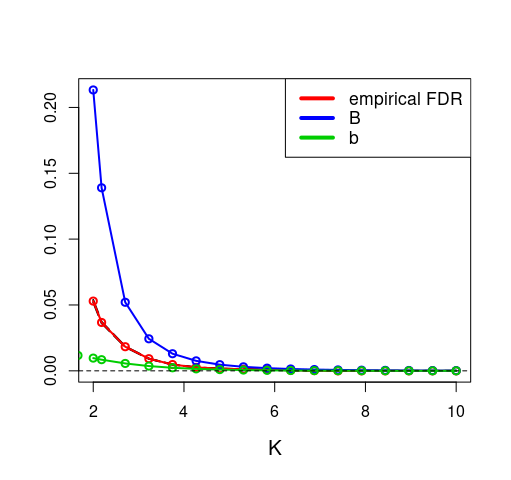}
\end{subfigure}
\caption{Left~: curves of the empirical values of $\text{FDR}(\Hat{m}(K))$ (red) and of terms $b(K,\beta^*,\sigma^2)$ (green) and $B(K,\beta^*,\sigma^2)$ (blue) for the orthogonal design matrix $X$ for the \textit{toy data set} described in Section~\ref{simulation_protocol}. Right~: curves are plotted only for $K\geq 2$.}
\label{upper_lower_bound_FDR_10}
\end{figure}

In Figure~\ref{upper_lower_bound_FDR_10}, we plot the empirical estimation of the $\text{FDR}(\Hat{m}(K))$ with the functions $b(K,\beta^*,\sigma^2)$ and $B(K,\beta^*,\sigma^2)$ on a grid of positive $K$ (left) and for $K \geq 2$ (right). Graphs are obtained from the \textit{toy data set} described in Section~\ref{simulation_protocol} where $X$ is orthogonal. The left figure is devoted to illustrate Corollary~\ref{corollary}. The FDR values are well smaller than the upper bound values and larger than the lower bound ones. From the right figure and in accordance with Corollary~\ref{asympt_K}, the empirical values of $\text{FDR}(\Hat{m}(K))$ tend to $0$ when $K$ increases and the convergence rate seems to be exponential. Moreover, the curves of $b(K,\beta^*,\sigma^2)$ and $B(K,\beta^*,\sigma^2)$ frame the empirical FDR and the difference between the three functions becomes quickly negligible for $K$ larger than $2$.

\section{Trade-off between the PR and the FDR controls.}
\label{trade_off_fixed_collection}
While bounds $b(K,\beta^*,\sigma^2)$ and $B(K,\beta^*,\sigma^2)$ are easily understandable and fully implementable, they depend on $\beta^*$, $\sigma^2$ and $D_m^*$. These quantities are unknown in practice. For a practical use, we propose to replace the theoretical bounds on the FDR as well as the theoretical expression of the PR with observable quantities (Subsection~\ref{data_driven_terms}). Then, we propose an algorithm to calibrate the hyperparameter $K$ from the data set such that both PR and FDR are controlled (Subsection~\ref{The_heuristic}). 

As variables are not usually naturally ranked, we explore the robustness of our algorithm under perturbations of the correct variable ordering and present approaches for obtaining a variable ordering in a data-driven manner. Results are provided in Subsection~\ref{non_ordered}. Lastly, our algorithm is compared with some existing variable selection procedures in Subsection~\ref{Comparaison_methods}, in terms of both PR and FDR.

\subsection{Estimation of the unknown quantities appearing in our bounds.}
\label{data_driven_terms}

For a practical use, we propose a new version of the predictive risk metric to evaluate the prediction performances of the selected estimator $\Hat{\beta}_{\Hat{m}(K)}$. The main advantage is that our approach does not require splitting the dataset in two sets (training and validation sets). The key is to compare $\Hat{\beta}_{\Hat{m}(K)}$ with $\Hat{\beta}_{\Hat{m}(2)}$ which is the benchmark (Section \ref{PR_estimated}). As for the theoretical bounds of the FDR, the unknown parameters $\beta^*, \sigma^2$ and $D_{m^*}$ have to be estimated. Using the simulation study, $\beta^*$ is estimated by $\Hat{\beta}_{\Hat{m}(4)}$, $D_{m^*}$ is estimated by $D_{\Hat{m}(4)}$ and $\sigma^2$ is estimated by using the slope heuristic method \cite{birge2007minimal} (Section \ref{parameter_estimations}).


\subsubsection{Estimation of the PR.}
\label{PR_estimated}
Commonly, the predictive risk is evaluated with the mean squared error on a validation set independent from the training set used to estimate the parameters (see Formula~(\ref{PR_def}) for the definition). However, it requires separating the dataset in two parts which increases the estimation errors. Here, we propose to use the entire dataset to both apply the model selection procedure and evaluate the predictive risk. Intuitively, the response vector $Y$ is replaced with $X\Hat{\beta}_{\Hat{m}(2)}$ which provides good prediction performances \cite{birge2007minimal}. Moreover, by re-expressing the PR, it is straightforward to show that for all $K>0$ and $K^{'}>0$~: 

\vspace{0.1 cm}
\begin{align}
\mathbb{E}[||Y - X\Hat{\beta}_{\Hat{m}(K)}||_2^2] &- \mathbb{E}[||Y - X\Hat{\beta}_{\Hat{m}(K^{'})}||_2^2] \notag \\
& = \mathbb{E}[||X\Hat{\beta}_{\Hat{m}(2)} - X\Hat{\beta}_{\Hat{m}(K)}||_2^2] - \mathbb{E}[||X\Hat{\beta}_{\Hat{m}(2)} - X\Hat{\beta}_{\Hat{m}(K^{'})}||_2^2] \notag \\
& \hspace{1 cm} - 2 \mathbb{E}[\langle X\Hat{\beta}_{\Hat{m}(2)} - Y, X\Hat{\beta}_{\Hat{m}(K^{'})}-X\Hat{\beta}_{\Hat{m}(K)} \rangle].
\label{new_risk}
\end{align}
According to \cite{birge2001gaussian}, the constant $2$ provides the optimal asymptotic control of~(\ref{theorical_rik}), so $||Y-X\Hat{\beta}_{\Hat{m}(2)}||_2$ is close to $0$. Moreover, $X\Hat{\beta}_{\Hat{m}(2)}$ is close to $\Pi_{\text{Im}(X)}(Y)$, so $X\Hat{\beta}_{\Hat{m}(2)} - Y$ almost belongs to the subspace $\text{Im}(X)^{\perp}$. In addition, $X\Hat{\beta}_{\Hat{m}(K)}$ and $X\Hat{\beta}_{\Hat{m}(K^{'})}$ belongs to $\text{Im}(X)$, so the last term in~(\ref{new_risk})
is close to $0$ and is negligible compared to the two others.
So, for all $K>0$ and $K^{'}>0$, $\mathbb{E}[||Y - X\Hat{\beta}_{\Hat{m}(K)}||_2^2] - \mathbb{E}[||Y - X\Hat{\beta}_{\Hat{m}(K^{'})}||_2^2]$ equals $\mathbb{E}[||X\Hat{\beta}_{\Hat{m}(2)} - X\Hat{\beta}_{\Hat{m}(K)}||_2^2] - \mathbb{E}[||X\Hat{\beta}_{\Hat{m}(2)} - X\Hat{\beta}_{\Hat{m}(K^{'})}||_2^2]$ up to an additive negligible term.
Hence, the constant $K$ minimizing $\mathbb{E}[||X\Hat{\beta}_{\Hat{m}(2)} - X\Hat{\beta}_{\Hat{m}(K)}||_2^2]$ and the one minimizing $\mathbb{E}[||Y - X\Hat{\beta}_{\Hat{m}(K)}||_2^2]$ are almost equal. Therefore, to evaluate the prediction performances of $\Hat{m}(K)$, we propose to compare prediction performances of the estimates $X\Hat{\beta}_{\Hat{m}(K)}$ and $X\Hat{\beta}_{\Hat{m}(2)}$. We introduce the following term that we call \textit{estimated difference in predictions}~: 
\begin{equation}
    \widehat{\text{diff-PR}}(\Hat{m}(K)) = \frac{1}{n} \underset{i=1}{\overset{n}{\sum}} \Big( \big(X\Hat{\beta}_{\Hat{m}(2)}\big)_i - \big(X\Hat{\beta}_{\Hat{m}(K)}\big)_i\Big)^2.
    \label{diff_PR}
\end{equation}
For the rest of the paper, the empirical version of~(\ref{diff_PR}) is calculated averaging over $100$ independent data sets and is denoted $\text{diff-PR}$. 
If this difference is significantly smaller than the noise level $\sigma^2$, the model $\Hat{m}(K)$ has performances similar to those satisfied by $\Hat{m}(2)$.

\subsubsection{Estimation of the FDR.}\label{parameter_estimations}
The functions $b(\cdot,\beta^*,\sigma^2)$ and $B(\cdot,\beta^*,\sigma^2)$ are explicit and easily implementable but depend on $\beta^*$, $\sigma^2$ and $D_m^*$, which are unknown.  \\
We propose~: 
\begin{enumerate}
    \item to apply the slope heuristic method \cite{birge2007minimal} to get an estimator $\Hat{\sigma}^2$ of $\sigma^2$, 
    \item to replace $\beta^*$ by the estimator $\Hat{\beta}_{\Hat{m}(4)}$,
    \item to replace $D_{m^*}$ by the number of non zero in $\Hat{\beta}_{\Hat{m}(4)}$.
\end{enumerate}
Choices of these estimators are crucial since they are proposed as inputs to the algorithm. Justifications of the choices are provided in the Supplementary file \cite{lacroix:hal-04625023} in which an extensive simulation study is presented.

\subsection{A data-dependent calibration of $K$ in the model selection procedure.}
\label{The_heuristic}
We propose a completely data-driven calibration of the hyperparameter $K$ up to $\alpha$ and $\gamma$. 
These parameters $\alpha$ and $\gamma$ are set by the user given values of the FDR and PR that are still deemed acceptable.
The algorithm depends on the  functions $K \longrightarrow B(\cdot,\Hat{\beta}_{\Hat{m}(4)},\Hat{\sigma}^2)$ and $K \longrightarrow \widehat{\text{diff-PR}}(\Hat{m}(K))$
to obtain a lower bound on both PR and FDR. \\
We propose the following algorithm~: 

\begin{algorithm}[H]
	\SetAlgoLined 
	
	\vspace{0.2 cm}
	\begin{enumerate}
        \item Choose $\alpha$ the threshold for the FDR control and $\gamma$ the threshold for the \textit{estimated risk} estimated difference in predictions~(\ref{diff_PR}).
        \item Compute $I_1 = \Big\{ K \geq 2, \ B(K,\Hat{\beta}_{\Hat{m}(4)},\Hat{\sigma}^2) \ \in \ ]0,\alpha[\Big\}$.
        \item 
        	Compute $I_2 = \Big\{ K \geq 2, \  \widehat{\text{diff-PR}}(\Hat{m}(K)) < \gamma \times \Hat{\sigma}^2 \Big\} $.
        \item If $I_1 \cap I_2 \neq \emptyset$, return $\min \Big\{K, \ K \in I_1 \cap I_2 \Big\}$ \;
        Otherwise, return $\min \Big\{K, K \ \in I_1 \Big\}$ or take a larger value of either $\alpha$ or $\gamma$.
    \end{enumerate}
	\caption{Algorithm to calibrate $K$}
	\label{algo_FDR}
\end{algorithm}  


\begin{figure}[h!]
\begin{subfigure}{0.49\linewidth}
    \centering
   \includegraphics[width=1\linewidth]{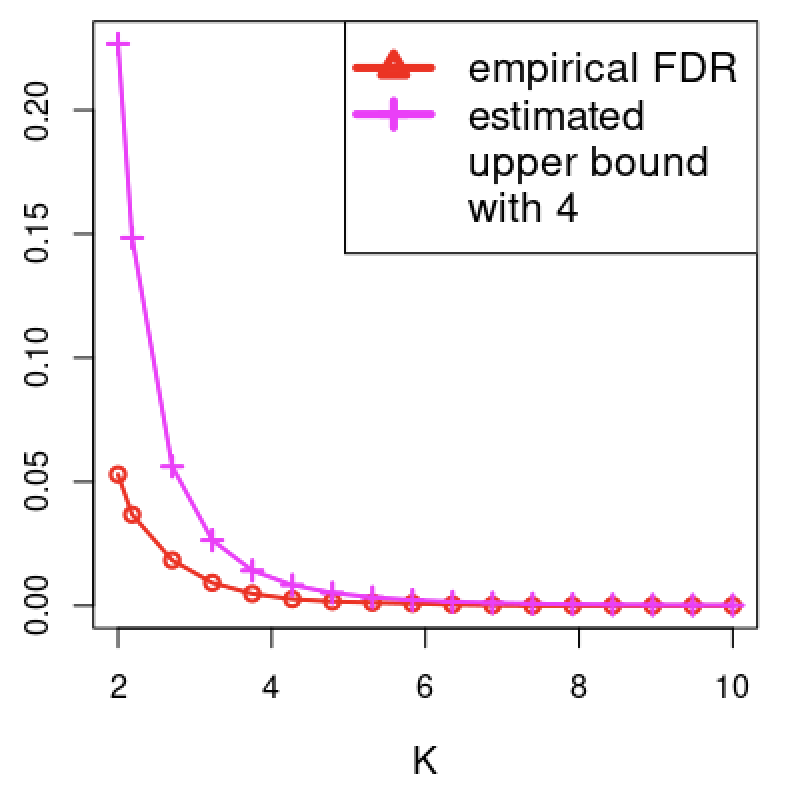}
    \caption*{FDR}
\end{subfigure}
 \begin{subfigure}{0.49\linewidth}
    \centering
    \includegraphics[width=1\linewidth]{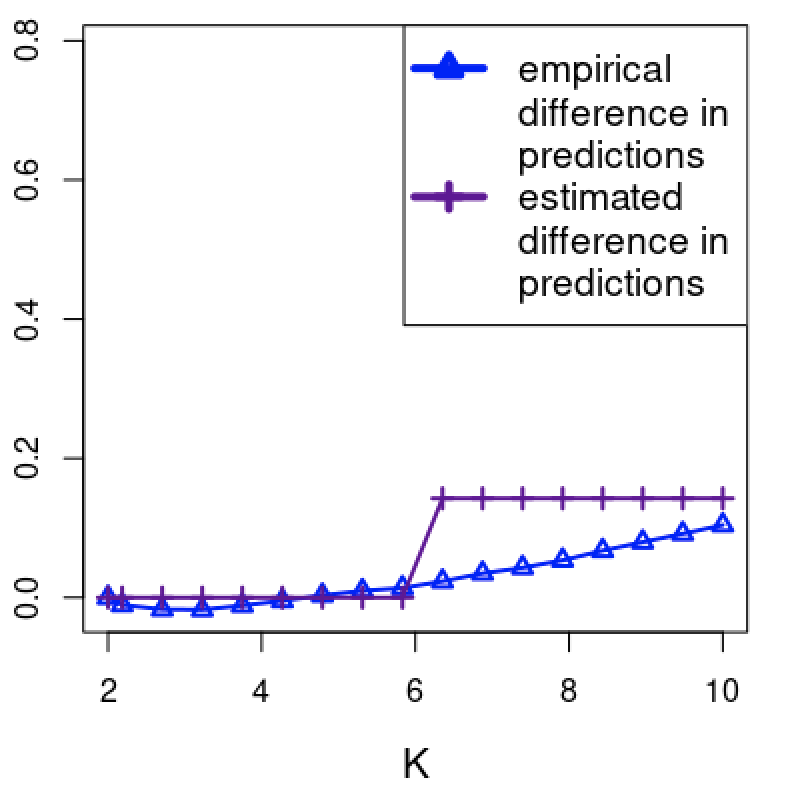}
 \caption*{diff-PR}
\end{subfigure} 
\begin{subfigure}{0.29\linewidth}
    \centering
   \includegraphics[width=1\linewidth]{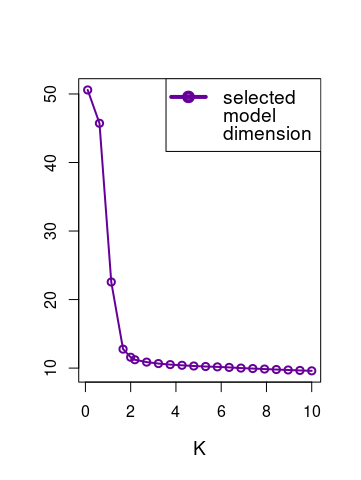}
    \caption*{selected model per dim}
\end{subfigure}
\hfill
\begin{subfigure}{0.6\linewidth}
    \centering
    \includegraphics[width=1\linewidth]{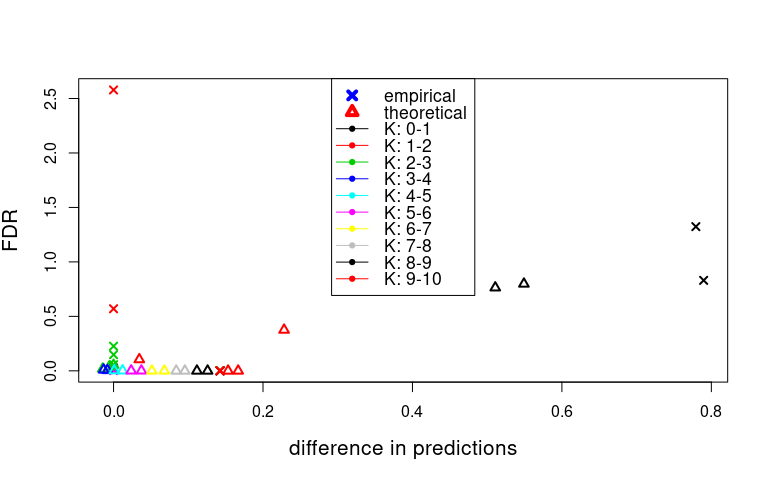}
    \caption*{$\text{FDR}$ and $\widehat{\text{FDR}}$ as function of diff-PR and  $\widehat{\text{diff-PR}}$}
\end{subfigure}
\caption{Top~: Curves of the empirical functions $\text{FDR}\big(\Hat{m}(K)\big)$ (red) and diff-PR$\big(\Hat{m}(K)\big)$ (blue), of the $B(K,\Hat{\beta}_{\Hat{m}(4)},\Hat{\sigma}^2)$ functions (pink) and of  $\widehat{\text{diff-PR}}\big(\Hat{m}(K)\big)$ (violet) for $K \geq 2$ for the \textit{toy data set}. Bottom~: Curves of the $D_{\Hat{m}(K)}$ as function of $K$ averaged over the $1000$ data sets (left) and values of the empirical $\text{FDR}\big(\Hat{m}(K)\big)$ and $\widehat{\text{FDR}}\big(\Hat{m}(K)\big)$ as functions of diff-PR$\big(\Hat{m}(K)\big)$ and  $\widehat{\text{diff-PR}}\big(\Hat{m}(K)\big)$ (right) for all $K>0$ and for the \textit{toy data set}.}
\label{estimated_PR_FDR}
\end{figure}


Curves of Figure~\ref{estimated_PR_FDR} are generated from the \textit{toy data set} and from the simulation protocol described in Section~\ref{simulation_protocol}. 
Parameters $\alpha$ and $\gamma$ are free and are defined by the user for maximum acceptable values for FDR and PR. In
this example, we choose $\alpha=0.05$ and $\gamma = 0.1$.
The graph at the bottom right shows that there exists some constants $K$ for which a trade-off between both theoretical FDR and PR and both empirical FDR and PR can be achieved. Here, the range of $K$ values is given by the interval $[2,6]$. These values correspond to a selected model dimension close to $D_{m^*}$ (Figure~\ref{estimated_PR_FDR} at  the bottom left). By applying our algorithm on this example, we get $I_1 = [3.3,10]$ and $I_2 = [2,5.8]$ and so, our proposed algorithm returns $K=3.3$.
The evaluation of the prediction performances provided by the selected model $\Hat{m}(3.3)$ is equal to $1.14$ 
and we get $B(3.3,\Hat{\beta}_{\Hat{m}(4)},\Hat{\sigma}^2)=0.03$. The constant $K=3.3$ corresponds to a low value of both empirical predictive risk and FDR functions. Indeed, the empirical predictive risk of $\Hat{m}(3.3)$ is equal to $1.24$ and the empirical FDR of $\Hat{m}(3.3)$ is equal to $0.01$. To compare with the usual choice $K=2$, the empirical predictive risk of $\Hat{m}(2)$ is equal to $1.25$ and the empirical FDR of $\Hat{m}(2)$ is equal to $0.05$. Hence, our proposed algorithm allows to maintain the prediction performances from $\Hat{m}(2)$, reinforce the control of the FDR criterion and so gain a convenient trade-off between PR and FDR.

In the supplementary file \cite{lacroix:hal-04625023},
the algorithm~\ref{algo_FDR} is applied to several data sets generated from various sets of parameters and described in Table~\ref{Table_scenario}. Each time, the hyperparameter $K$ is strictly larger than the commonly used constant $2$ and provides a low value of FDR while maintaining the prediction performances given by $\Hat{m}(2)$. 

\subsection{Towards non-ordered variable selection}
\label{non_ordered}
For most applications, no canonical order of variables is available and our algorithm cannot be applied directly. We propose to generate candidate orders from random procedures to use our method when an ordering of the variables is not given a priori. \\
More precisely, we first study the robustness to variable ordering of our method (Subsection~\ref{robustness}) and provide some procedures to construct variable orders in practice (Subsection~\ref{random_collection}). Our algorithm~\ref{algo_FDR} is then applied from the generated rankings in Subsection~\ref{Comparaison_methods}.

\subsubsection{Robustness to variable ordering}
\label{robustness}
We propose numerical experiments where the assumption of ordered variables is not fulfilled. The goal is to test the robustness to variable ordering of our algorithm by measuring how this impacts its performances. 
We consider the \textit{toy data set} where the size of the true model is $D_{m^{*}} = 10$ and we consider three collections which are the results of a random permutation of the nested model collection (\ref{nested_collection}) on respectively the first ten, the first twelve and the first fifteen variables.
Hence, active variables remain first in the first collection; perturbations may introduce non-active variables among the first ten variables in the second collection, while in the third collection, some active variables can be pushed far into the collection.

\begin{figure}[h!]
\begin{subfigure}{0.49\linewidth}
    \centering
    \includegraphics[width=1\linewidth]{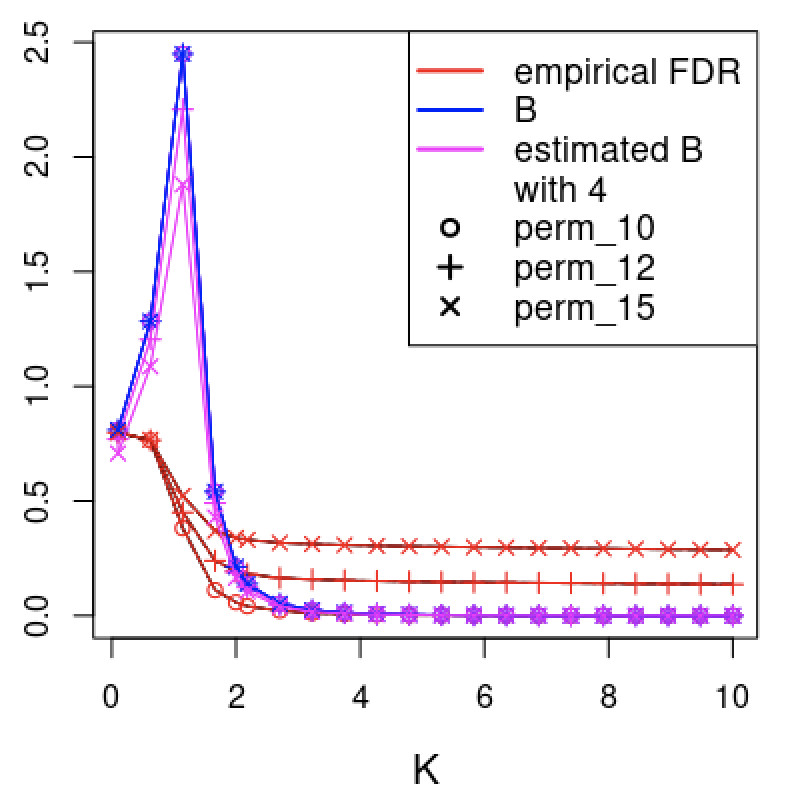}
 \caption*{FDR}
\end{subfigure}
 \begin{subfigure}{0.49\linewidth}
    \centering
    \includegraphics[width=1\linewidth]{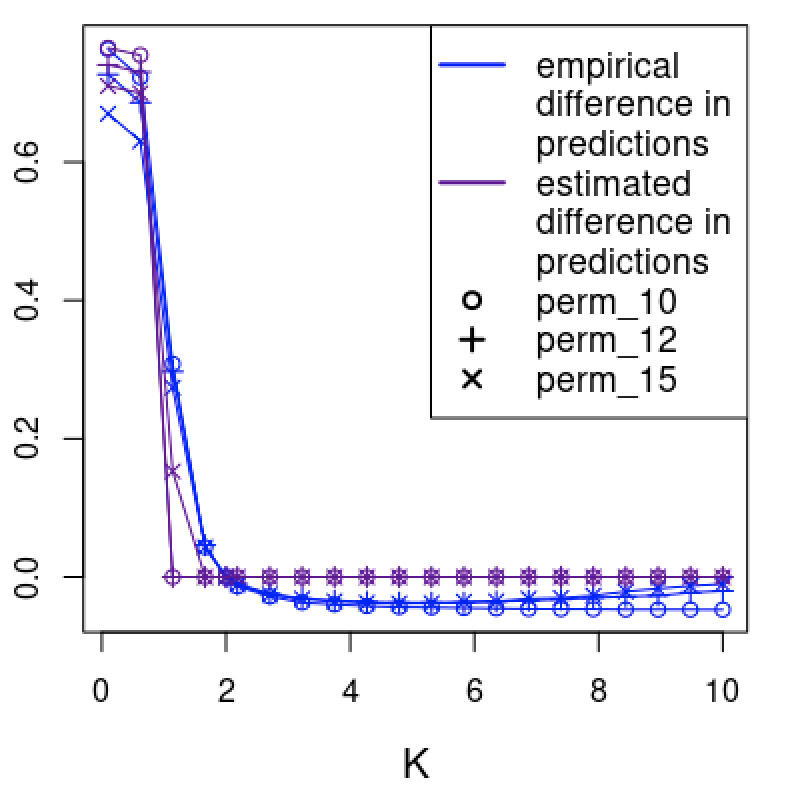}
 \caption*{diff-PR}
\end{subfigure}   
\caption{Curves of the empirical functions $\text{FDR}\big(\Hat{m}(K)\big)$ (red) and diff-PR$\big(\Hat{m}(K)\big)$ (blue), of the $B(K,\beta^{*},\sigma^2)$ functions (blue), the $B(K,\Hat{\beta}_{\Hat{m}(4)},\Hat{\sigma}^2)$ functions (pink) and $\widehat{\text{diff-PR}}\big(\Hat{m}(K)\big)$ (violet) for the \textit{toy data set} and for the three perturbed collections.}    
\label{robustness_order}
\end{figure}
To test the robustness to variable ordering of our algorithm, Figure~\ref{robustness_order} shows how the empirical values of FDR behaves in relation to its estimated upper bound as well as the empirical and estimated differences in predictions for the three perturbed collections.
We observe
that when the permutation concerns only the active variables (on the nested model collection (\ref{nested_collection})), values of the empirical values of FDR are smaller than the values of $B(K,\beta^*,\sigma^2)$ and $B(K,\Hat{\beta}_{\Hat{m}(4)},\Hat{\sigma}^2)$ which are close. For prediction, the $\text{diff-PR}$ function has the same behavior than for the nested model collection (\ref{nested_collection}).\\
When the permutation concerns the first twelve and the first fifteen variables, the empirical values of FDR is higher than $B(K,\beta^*,\sigma^2)$ and $B(K,\Hat{\beta}_{\Hat{m}(4)},\Hat{\sigma}^2)$ as soon as $K \geq 2$ and with an increasing
deviation when the error on estimated variable order increases.
Moreover, we observe that the rate of the empirical values of FDR decay is much slower and values are high whatever the value of $K$~:
above $0.13$ and $0.28$ for the second and third perturbed model collection,
respectively. For prediction, the $\text{diff-PR}$ function 
is stable for $K \geq 2$.\\
Hence, permutations among only the active variables have no effect on FDR and PR. However, as soon as a non-active variable is ranked before an active variable, the theoretical guarantees of Theorem~\ref{theorem} no longer hold and empirical values of FDR can be high whatever the value of $K$. 
To tackle this problem, one solution consists of combining our algorithm with a method discriminating active and non-active variables. We consider this direction for the rest of this section.

\subsubsection{Random variable order}
\label{random_collection}
We consider four strategies to estimate variable orders. The \textit{Bolasso} procedure \cite{bach2008bolasso} consists of solving the Lasso equation~(\ref{lasso_eq}), through the LARS algorithm \cite{efron2004least}, on several resamples and for different values of $\lambda$. Variables are ranked according to their occurrence frequency in the models averaged over the $\lambda$'s and the resamples. The \textit{random forests} \cite{breiman2001random} are aggregation of several binary decision trees. The tree predictors are generated on bootstrap resamples and on a subset of variables randomly chosen. Here, we combine the random forest with the \textit{recursive feature elimination (RFE) algorithm} \cite{guyon2002gene} whose efficiency has been proved especially for correlated variables \cite{gregorutti2017correlation}.
Variables are ordered according to their importance defined by the random forest. 
The Sorted $\ell_1$ penalized estimator (\textit{SLOPE}) \cite{bogdan2013statistical} is obtained by solving the Lasso equation~(\ref{lasso_eq}) with $\lambda$ a $p$-vector calculated from a multiple testing procedure. Lastly, \textit{the knockoff method} \cite{barber2015controlling} consists of building a non-active copy $\Tilde{X}_j$ of each $X_j$ and solving the Lasso equation~(\ref{lasso_eq}) for several values of $\lambda$ on the augmented matrix composed on the $X_j$ and $\Tilde{X}_j$ variables. Variables are then sorted according to the values of 
\begin{equation*}
    W_j = \max\left(Z_j,\Tilde{Z}_j\right) \times \text{sign}\left(Z_j-\Tilde{Z}_j\right),
\end{equation*}
for all $j \in \{1,\cdots,p\}$, where $Z_j$ and $\Tilde{Z}_j$ correspond to the largest $\lambda$ for which $X_j$ and $\Tilde{X}_j$ are respectively selected. 
For each strategy, a random model collection, on which model selection can be applied is defined from the estimated variable order.
Bolasso and random forest both provide a variable order from a prediction point of view, whereas SLOPE and the knockoff method provide a variable order by considering both PR and FDR controls.

To quantify the ability to discriminate between active and non-active variables, we calculate the proportion of active variables in models of size $5$, $10$, $15$ and $20$ of each random collection. Results are presented in Table~\ref{proportion_active} where each value is the average over $100$ independent iterations. With Bolasso, random forests and SLOPE, $50$ resamples for the construction of random collections are considered.

\begin{table}[ht]
\centering
\begin{tabular}{|m{2.5cm}||m{1.8cm}|m{1.8cm}|m{1.8cm}|m{1.8cm}|}
  \hline
  & Bolasso & SLOPE & random forests & the knockoff \newline method \\ 
  \hline
  $D_m$ = 5 & 0.99 & 0.99 & 0.98 & 1.00 \\ 
  $D_m$ = 10 & 0.83 & 0.83 & 0.82 & 0.85 \\ 
  $D_m$ = 15 & 0.92 & 0.92 & 0.90 & 0.90 \\ 
  $D_m$ = 20 & 0.95 & 0.95 & 0.93 & 0.92 \\ 
   \hline
\end{tabular}
\caption{Proportion of active variables in models of size $5$, $10$, $15$ and $20$ for random collections built with Bolasso, SLOPE, random forest and the knockoff method. Each value is the average over $100$ independent iterations.}
\label{proportion_active}
\end{table}

The collection built by the knockoff method is the collection containing the fewest non-active variables in the models of size $5$ and $10$.   
For models of size $15$ and $20$, Bolasso and SLOPE are slightly better and the proportion of active variables is always larger than $0.9$. We observe with the model of size $20$ that there are active variables far away in the collections, which is undesirable. Since Bolasso is slightly better than SLOPE on scenarios described in Table~\ref{Table_scenario} (see the supplementary file \cite{lacroix:hal-04625023}), we consider the random collections built with the knockoff method and Bolasso in the following.

\subsection{Comparison with other variable selection methods}
\label{Comparaison_methods}
Performances of Algorithm~\ref{algo_FDR} are compared with three variable selection procedures. 
The \textit{LinSelect} penalty \cite{giraud2012high}
is a model selection criterion introduced in a non-asymptotic setting to take into account of the randomness of the model collection. 
The penalty function provides a sharp oracle inequality.
The \textit{$V$-fold cross-validation} \cite{allen1974relationship,stone1974cross,shao1993linear} is the most popular, adaptive and simple variable selection method. The final selected model is the one with the best prediction performance accuracy over the data sets obtained by splitting the initial data set into a training set and a validation set.
The last method is the knockoff method where the final variable subset is composed by $X_j$ such that $W_j \geq T$ where $T$ is defined to satisfy a given control of FDR. LinSelect and $V$-fold cross-validation aim at providing a control of PR while the knockoff method aims at providing a control of FDR.

We consider the $50$-fold cross-validation and evaluate PR and FDR of our algorithm and of the three variable selection procedures on the nested model collection (\ref{nested_collection}) where the active variables are properly ranked before the non-active variables
and on random collections built with Bolasso and with the knockoff method. \\
\begin{table}[ht]
\centering
\begin{tabular}{|m{5cm}||m{1.5cm}|m{1.5cm}|m{1.5cm}|}
  \hline
 & $D_{\Hat{m}}$ & PR($\Hat{m}$) & FDR($\Hat{m}$) \\ 
  \hline
   \textbf{nested model collection} & & &   \\ 
   \hline
LinSelect & 8.86 & 1.35 & 0.01 \\ 
  $50$-fold CV & 26.42 & 2.29 & 0.45 \\ 
  Knockoff &  &  &  \\ 
  Our algorithm & 9.37 & 1.25 & 0.00 \\ 
   \hline
   \textbf{Bolasso collection} & & &   \\ 
   \hline
    LinSelect & 10.17 & 1.93 & 0.07 \\ 
  $50$-fold CV & 22.10 & 2.77 & 0.37 \\ 
  Knockoff &  &  &  \\ 
  Our algorithm & 13.96 & 1.59 & 0.25 \\ 
       \hline
        \textbf{the knockoffs collection} & & &   \\ 
   \hline
    LinSelect & 8.86 & 1.81 & 0.03 \\ 
  $50$-fold CV & 20.85 & 2.45 & 0.35 \\ 
  Knockoff & 0.00 & 14.10 & 0.00 \\ 
  Our algorithm & 13.33 & 1.65 & 0.18 \\ 
    \hline 
\end{tabular}
\caption{Results of the dimension, PR and FDR of the selected models obtained by LinSelect, the $50$-fold CV, the knockoff method and our algorithm, applied on the nested model collection~(\ref{nested_collection}) and on random collections built with Bolasso and the knockoff method. Each value is the average over $100$ independent iterations. PR and FDR of each selected model are the empirical quantities. Input parameters of our algorithm are fixed to $\gamma = 0.1$ and $\alpha=0.05$. Note that the knockoff variable selection method is adapted for only the knockoff random model collection. 
}
\label{performances_selection}
\end{table}

Table~\ref{performances_selection} shows the performances of the four variable selection procedures. As the knockoff method is a procedure for both collection generation and variable selection, the knockoff collection is only used with the knockoff variable selection method. On the nested model collection~(\ref{nested_collection}),
our algorithm provides the smallest values in both FDR and PR and the average of the selected model sizes is the closest to the true model size. LinSelect behaves in a similar way while the $50$-fold cross validation selects a model located in the over-fitting area providing a high value of both PR and FDR. 
On random model collections, performances are deteriorated for all methods, as expected,
and are slightly better on model collections built with the knockoff method than with Bolasso. Our algorithm provides the smallest PR but LinSelect provides the smallest FDR. While LinSelect is designed to control the PR theoretically, we remark that it is apparently also a relevant candidate to control FDR and to achieve a trade-off between both PR and FDR. The $50$-fold cross validation method provides poor results while the knockoff method 
selects the empty set of variable. The size of the model selected by our algorithm is larger when the collections are random 
and provides high values of FDR. 
These results show that a meticulous choice of 
$\gamma$ and $\alpha$ is important to improve our algorithm performances. 

The robustness of our algorithm to variable order, the construction of random model collections and performances of the four variable selection procedures are studied on several data sets generated from various sets of parameters and described in Table~\ref{Table_scenario}. Results are presented in the supplementary file \cite{lacroix:hal-04625023},
and the conclusions remain the same.

\section{Conclusions.}
\label{conclusion_FDR}
The variable selection procedure in a high-dimensional Gaussian linear regression with sparsity assumption is commonly used to identify a set of variables with prediction performances or with as few non-active variables as possible. 
	For prediction performances, the PR is usually controlled via a penalized least-squares minimization; to avoid the selection of non-active variables, the FDR is usually controlled via a multiple testing approach.
Controlling the PR tends to select too many variables, including non-active ones, whereas controlling the FDR tends to select too few variables, leaving out some active ones.

This work shows that a convenient trade-off between PR and FDR can be achieved in ordered variable selection. The originality of this paper is to obtain this trade-off through a proper calibration of the hyperparameter $K$ in the penalty of the model selection~(\ref{our_penalty_FDR}).
Firstly, theoretical results lead to non-asymptotic lower and upper bounds on the $\text{FDR}\big(\Hat{m}(K)\big)$ function when $\sigma^2$ is known.
Asymptotic behaviors suggest that bounds are optimal. 
Secondly, the proposed methodology provides an algorithm to calibrate the hyperparameter $K$ in the penalty function when $\sigma^2$ is unknown. This algorithm is based on completely data-driven terms~: the estimated difference in predictions and the estimated upper bound on the FDR where the choices of estimators $\Hat{\sigma}^2$ and $\Hat{\beta}_{\Hat{m}(4)}$ are derived from an extensive simulation study. 
The hyperparameter $K$ is calibrated from the dataset to ensure $\widehat{\text{diff-PR}}(\Hat{m}(K)) < \gamma \times \Hat{\sigma}^2$ under the constraint $B(K,\Hat{\beta}_{\Hat{m}(4)},\Hat{\sigma}^2) < \alpha$. 
Our algorithm is validated on an extensive simulation study and allows to obtain a selected model ensuring a small value of both theoretical PR and FDR. The calibrated hyperparameter $K$ is strictly larger than the commonly used constant $K=2$. Moreover, PR and FDR values of the selected model with our algorithm are the smallest values compared with the existing variable selection procedures considered in the paper.
Lastly, we propose a preliminary response to construct a random model collection to extend our work to non-ordered variable selection. The performances of our algorithm deteriorate as soon as a non-active variable is ranked before an active one, but combined with procedures with high ability to discriminate between active and non-active variables, our algorithm is competitive with some existing variable selection procedures. 

If the selected model is the largest one of the nested model collection (with dimension equals $q$), the associated lower and upper bounds values equal $0$. In this case, a distinction between $D_{m^*} = q$ and $D_{m^*} < q$ is not possible without additional arguments. This is a limitation of our work.

The main perspective of our work is to generalize our theoretical results to non-ordering variable selection. The ordered variable assumption is the key ingredient of our proofs and appears in the second line of the proof where the ratio is fixed, allowing randomness only on $\Hat{m}$ that we control thanks to the ordered model selection theory. Hence, relaxing this assumption requires new technical arguments and this is a real challenge for future work. Moreover, for non-ordered variable selection, the penalty function~(\ref{our_penalty_FDR}) 
has to include a logarithmic term 
to take into account that all possible models should be explored but this is computationally infeasible given the combinatorial nature of the problem. 
In this case, two hyperparameters have to be calibrated. Another way to generalize our work to non-ordered variable selection is to 
detect the value of $K$ from which the theoretical FDR is larger than the theoretical upper bound on the FDR and quantify the gap between the theoretical upper bound and the FDR. 

One way to improve the performances of our algorithm can be a meticulous choice of the algorithm input parameters $\alpha$ and $\gamma$, which are arbitrarily fixed in our work. \\
Achieving a trade-off between FDR and PR is not trivial and investigating alternatives in this direction 
can be considered in future work. In particular, in this work, the model collection is constructed from a prediction point of view method (minimization of the least squares values). It could be judicious to introduce the FDR metric from the step of model collection generation. \\
A possible opening is to study the potential characteristics of the hyperparameter $K$ provided by our algorithm in a theoretical point of view (dependence in $\beta^*$ and $\sigma^2$). 
Another possible extension is to study the false negative rate (FNR) function in the model selection procedure, similarly and in addition to the FDR one. This can provide a more powerful method, 
similarly to 
\cite{genovese2002operating, genovese2004stochastic}. \\
Finally, another generalization is to extend our theoretical results to unknown variance, random model collections or to non-fixed designs, which are more general frameworks adapted to some application points of view. These extensions are much more intricate. \\

\newpage
\section{Proofs of theoretical results.}
\label{proofs}
This section contains proofs of all the theoretical results of this paper.
\subsection{FDR expression in model selection.}
\label{proof_FDR_expression}
\textbf{\textit{Proof of Formula~\ref{FDR_with_crit}}.} \\
If $D_m^*=q$, then $\text{FP}(m) = 0$ for all $m \in \mathcal{M}$ and $\text{FDR}(m) = 0$ for all $m \in \mathcal{M}$. \\
Let us now suppose that $D_m^*<q$. 
The FDP expression within the model selection procedure is~: \\
\begin{align*}
    \forall K>0, \quad \quad \text{FDP}(\Hat{m}(K)) &= \frac{\text{FP}(\Hat{m}(K))}{\max(D_{\Hat{m}(K)},1)} \\
    &\underset{\textit{(*)}}{=} \quad \frac{D_{\Hat{m}(K)}-D_{m^{*}}}{D_{\Hat{m}(K)}} \ind{\{D_{\Hat{m}(K)}>D_{m^{*}}\}} \\
    &= \underset{r = 1}{\overset{q}{\sum}} \frac{r-D_{m^{*}}}{r}  \ind{\{r>D_{m^{*}}\}} \ind{\{D_{\Hat{m}(K)}=r\}} \\
    &\underset{\textit{(**)}}{=} \underset{r = D_{m^*}+1}{\overset{q}{\sum}} \frac{r-D_{m^*}}{r} \ind{\left\{\Hat{m}(K) = m_r\right\}} \\
    &\underset{\textit{(***)}}{=} \underset{r = D_{m^*}+1}{\overset{q}{\sum}} \frac{r-D_{m^*}}{r} \ind{\left\{\underset{\underset{\ell \neq r}{\ell = 0}}{\overset{q}{\cap}} \{ \text{crit}_K(m_r) < \text{crit}_K(m_\ell)\}\right\}}.
\end{align*}
\textit{(*)} and \textit{(**)} are due to the fact that models $\left(m\right)_{m \in \mathcal{M}}$ are nested and $m^* \in \mathcal{M}$. \textit{(***)} is obtained since the $\text{crit}_K$ function is injective on $\mathcal{M}$. Finally, by taking the expectation, we obtain the FDR expression~(\ref{FDR_with_crit}). 
\qed

\vspace{0.3 cm}
\textbf{\textit{Proof of Proposition~\ref{Proposition}}.} \\
Before proving Proposition~\ref{Proposition}, let us cite and prove two lemmas.
\begin{lemma}
For $r \in \{D_{m^*}+1, \cdots,q\}$ and for all $\ell \in \{0,\cdots,r-1\}$~:
$$||Y-X\Hat{\beta}_{m_r}||_2^2 - ||Y-X\Hat{\beta}_{m_\ell}||_2^2 = - \underset{k=\ell+1}{\overset{r}{\sum}} \langle Y, u_k \rangle^2.$$
\label{Lemma_1}
\end{lemma}

\vspace{0.3 cm}
\begin{lemma}
For $r \in \{D_{m^*}+1, \cdots,q\}$ and for all $\ell \in \{r+1,\cdots,q\}$~:
$$||Y-X\Hat{\beta}_{m_r}||_2^2 - ||Y-X\Hat{\beta}_{m_\ell}||_2^2 = \underset{k=r+1}{\overset{\ell}{\sum}} \langle Y, u_k \rangle ^2.$$
\label{Lemma_2}
\end{lemma}

\vspace{0.3 cm}
\textbf{\textit{Proof of Lemma~\ref{Lemma_1}}.}\\
For $r \in \{D_{m^*}+1,\cdots,q\}$ and $\ell \in \{0,\cdots,r-1\}$~:
\begin{align*}
    &||Y-X\Hat{\beta}_{m_r}||_2^2 - ||Y-X\Hat{\beta}_{m_\ell}||_2^2 = ||X\Hat{\beta}_{m_r}||_2^2 - ||X\Hat{\beta}_{m_\ell}||_2^2 + 2 \langle Y,X\Hat{\beta}_{m_\ell}-X\Hat{\beta}_{m_r}\rangle \\
    &= ||X\Hat{\beta}_{m_r}||_2^2 - ||X\Hat{\beta}_{m_\ell}||_2^2 + 2 \langle Y-X\Hat{\beta}_{m_r},X\Hat{\beta}_{m_\ell}\rangle \\
    & \hspace{3 cm} - 2\langle Y-X\Hat{\beta}_{m_r},X\Hat{\beta}_{m_r} \rangle + 2 \langle X\Hat{\beta}_{m_r},X\Hat{\beta}_{m_\ell} \rangle - 2 ||X\Hat{\beta}_{m_r}||_2^2 \\
    &= -||X\Hat{\beta}_{m_r}||_2^2 - ||X\Hat{\beta}_{m_\ell}||_2^2 + 2 \langle X\Hat{\beta}_{m_r},X\Hat{\beta}_{m_\ell} \rangle = - ||X\Hat{\beta}_{m_r}-X\Hat{\beta}_{m_\ell}||_2^2.
\end{align*}
The last line is due to the fact that $Y-X\Hat{\beta}_{m_r} \in (m_r)^{\perp} \subset (m_\ell)^{\perp}$ since $m_\ell \subset m_r$ and $X\Hat{\beta}_{m_r}$ is the projection of $Y$ onto $m_r$. \\
Then, 
\begin{align*}
    ||X\Hat{\beta}_{m_r}-X\Hat{\beta}_{m_\ell}||_2^2 &= ||\Pi_{m_r}(Y) - \Pi_{m_\ell}(Y)||_2^2 \\
    &= ||\Pi_{\text{Span}(X_1,\cdots,X_r)}(Y) - \Pi_{\text{Span}(X_1,\cdots,X_\ell)}(Y)||_2^2  \\
    &\underset{\textit{(*)}}{=} ||\Pi_{\text{Span}(u_1,\cdots,u_r)}(Y) - \Pi_{\text{Span}(u_1,\cdots,u_\ell)}(Y)||_2^2  \\
    &= ||\Pi_{\text{Span}(u_{\ell+1},\cdots,u_r)}(Y)||_2^2 \\
    & = || \underset{k=\ell+1}{\overset{r}{\sum}} \langle Y,u_k \rangle u_k ||_2^2  \\
    &\underset{\textit{(**)}}{=} \underset{k=\ell+1}{\overset{r}{\sum}} \langle Y, u_k \rangle ^2.
\end{align*}
\textit{(*)} come from the definition of $\left(u_1,\cdots,u_n\right)$ and \textit{(**)} is obtained by Parseval's identity. \\
\qed

\vspace{0.3 cm}
\textbf{\textit{Proof of Lemma~\ref{Lemma_2}}.}\\
For $r \in \{D_{m^*}+1,\cdots,q\}$ and $l \in \{r+1,\cdots,q\}$~:
\begin{align*}
    ||Y-X\Hat{\beta}_{m_r}||_2^2 - ||Y-X\Hat{\beta}_{m_\ell}||_2^2 &= ||X\Hat{\beta}_{m_r}||_2^2 - ||X\Hat{\beta}_{m_\ell}||_2^2 + 2 \langle Y,X\Hat{\beta}_{m_\ell}-X\Hat{\beta}_{m_r}\rangle \\
    &= ||X\Hat{\beta}_{m_r}||_2^2 - ||X\Hat{\beta}_{m_\ell}||_2^2 + 2 \langle Y-X\Hat{\beta}_{m_\ell},X\Hat{\beta}_{m_\ell}\rangle \\
    &\quad \quad - 2\langle Y-X\Hat{\beta}_{m_\ell},X\Hat{\beta}_{m_r} \rangle + 2 ||X\Hat{\beta}_{m_\ell}||_2^2 \\
    &\quad \quad - 2 \langle X\Hat{\beta}_{m_\ell},X\Hat{\beta}_{m_r} \rangle  \\
    &\underset{\textit{(*)}}{=} ||X\Hat{\beta}_{m_r}||_2^2 + ||X\Hat{\beta}_{m_\ell}||_2^2 - 2 \langle X\Hat{\beta}_{m_\ell},X\Hat{\beta}_{m_r} \rangle \\
    &=  ||X\Hat{\beta}_{m_\ell}-X\Hat{\beta}_{m_r}||_2^2.
\end{align*}
\textit{(*)} is due to the fact that $Y-X\Hat{\beta}_{m_\ell} \in (m_\ell)^{\perp} \subset (m_r)^{\perp}$ since $m_r \subset m_\ell$, and $X\Hat{\beta}_{m_\ell}$ is the projection of $Y$ onto $m_\ell$.\\
Then, 
\begin{align*}
    ||X\Hat{\beta}_{m_\ell}-X\Hat{\beta}_{m_r}||_2^2 &= ||\Pi_{m_\ell}(Y) - \Pi_{m_r}(Y)||_2^2 \\
    &= ||\Pi_{\text{Span}(X_1,\cdots,X_\ell)}(Y) - \Pi_{\text{Span}(X_1,\cdots,X_r)}(Y)||_2^2 \\
    &\underset{\textit{(*)}}{=} ||\Pi_{\text{Span}(u_1,\cdots,u_\ell)}(Y) - \Pi_{\text{Span}(u_1,\cdots,u_r)}(Y)||_2^2 \\
    &= ||\Pi_{\text{Span}(u_{r+1},\cdots,u_\ell)}(Y)||_2^2 \\
    &= || \underset{k=r+1}{\overset{\ell}{\sum}} \langle Y,u_k \rangle u_k ||_2^2   \\
    &\underset{\textit{(**)}}{=} \underset{k=r+1}{\overset{\ell}{\sum}} \langle Y, u_k \rangle ^2.
\end{align*}
\textit{(*)} come from the definition of $\left(u_1,\cdots,u_n\right)$ and \textit{(**)} is obtained by Parseval's identity. \\
\qed

\vspace{0.3 cm}
\textbf{\textit{Proof of Proposition~\ref{Proposition}}.}\\
Starting from~(\ref{FDR_with_crit}), we decompose the event $\left\{\underset{\underset{\ell \neq r}{\ell = 0}}{\overset{q}{\cap}} \{ \text{crit}_K(m_r) < \text{crit}_K(m_\ell)\}\right\}$ by the intersection of these two events \\
$\left\{\underset{\ell = 0}{\overset{r-1}{\cap}} \{ \text{crit}_K(m_r) < \text{crit}_K(m_\ell)\}\right\}$ and $\left\{\underset{\ell = r+1}{\overset{q}{\cap}} \{ \text{crit}_K(m_r) < \text{crit}_K(m_\ell)\}\right\}$. \\
By using the definition of the $\text{crit}_K$ function, we have for $r \in \{D_{m^*}+1,\cdots,q\}$ and $\ell \in \{0,\cdots,r-1,r+1,\cdots,q\}$~:
\begin{align*}
    \Big\{ \text{crit}_K(m_r) < \text{crit}_K(m_\ell) \Big\} &= \Big\{ ||Y-X\Hat{\beta}_{m_r}||_2^2 + K \sigma^2 r < ||Y-X\Hat{\beta}_{m_\ell}||_2^2 + K\sigma^2 \ell \Big\}  \\
    &= \Big\{ ||Y-X\Hat{\beta}_{m_r}||_2^2 - ||Y-X\Hat{\beta}_{m_\ell}||_2^2 < K\sigma^2(\ell-r) \Big\}.
\end{align*}
So, by applying Lemma~\ref{Lemma_1}, $\ell \in \{0,\cdots,r-1\}$~:
$$ \Big\{ \text{crit}_K(m_r) < \text{crit}_K(m_\ell) \Big\} = \Big\{ \underset{k=\ell+1}{\overset{r}{\sum}} \langle Y, u_k \rangle^2 > K\sigma^2(r-\ell) \Big\},$$
and by applying Lemma~\ref{Lemma_2}, $\ell \in \{r+1,\cdots,q\}$~: 
$$ \Big\{ \text{crit}_K(m_r) < \text{crit}_K(m_\ell) \Big\} = \Big\{ \underset{k=r+1}{\overset{\ell}{\sum}} \langle Y, u_k \rangle ^2 < K\sigma^2(\ell-r) \Big\}.$$
In this way, $\left\{\underset{\underset{\ell \neq r}{\ell = 0}}{\overset{q}{\cap}} \{ \text{crit}_K(m_r) < \text{crit}_K(m_\ell)\}\right\}$ is decomposed by two events~:
$$\left\{\underset{\ell = 0}{\overset{r-1}{\cap}} \left\{ \underset{k=\ell+1}{\overset{r}{\sum}} \langle Y, u_k \rangle^2 > K\sigma^2(r-\ell) \right\}\right\} \cap \left\{\underset{\ell = r+1}{\overset{q}{\cap}}\left\{ \underset{k=r+1}{\overset{\ell}{\sum}} \langle Y, u_k \rangle ^2 < K\sigma^2(\ell-r) \right\} \right\}.$$ 
Let us define $U$ the $n\times n$ matrix such that $u_k$ is the $k-$th column of $U$. Since $\varepsilon \sim \mathcal{N}(0,\sigma^2I_n)$ and $\left(u_1,\cdots,u_n\right)$ is an orthonormal basis of $\mathbb{R}^n$, we get $U^T \varepsilon = \Big(\langle \varepsilon, u_1 \rangle, \cdots, \langle \varepsilon, u_n \rangle \Big)^T \sim \mathcal{N}(0,\sigma^2 U I_n U^T) = \mathcal{N}(0,\sigma^2 I_n)$. Hence, random variables $\left(\langle Y, u_i \rangle \right)_{i \in \{1,\cdots,n\}}$ are independent with $\langle Y, u_i \rangle \sim \mathcal{N}\big(\langle X\beta^*,u_i \rangle,\sigma^2\big)$ for all $i$ in $\{1,\cdots,n\}$. Since the first event of the previous decomposition depends only on random variables $\langle Y, u_i \rangle$ for $i \in \{1,\cdots,r-1\}$ whereas the second one depends only on random variables $\langle Y, u_i \rangle$ for $i \in \{r+1,\cdots,q\}$, the two events are independent.
Hence, from~(\ref{FDR_with_crit}), we obtain for all $K>0$~: 
\begin{align*}
    \text{FDR}(\Hat{m}(K)) =  \underset{r = D_{m^*}+1}{\overset{q}{\sum}} \frac{r-D_{m^*}}{r} &\mathbb{P}\left( \underset{\ell = 0}{\overset{r-1}{\cap}} \left\{ \underset{k=\ell+1}{\overset{r}{\sum}} \langle Y,u_k \rangle^2 > K\sigma^2(r-\ell) \right\} \right) \\
    \times \ &\mathbb{P}\left( \underset{\ell = r+1}{\overset{q}{\cap}} \left\{ \underset{k=r+1}{\overset{\ell}{\sum}} \langle Y, u_k \rangle ^2 < K\sigma^2(\ell-r) \right\} \right).
\end{align*}

\vspace{0.2 cm}
Moreover, since $\langle X \beta^*, u_k \rangle = 0, \forall k > D_{m^*}$ and since $r \geq D_{m^*}+1$, we have~:
\begin{equation*}
    \underset{k=\ell+1}{\overset{r}{\sum}} \langle Y, u_k \rangle ^2 = \underset{k=\ell+1}{\overset{r}{\sum}} \langle \varepsilon, u_k \rangle^2.
\end{equation*}
So, for all $K>0$ and for each $r \in \{D_{m^*}+1, \cdots,q\}$~: 
\begin{align*}
    \mathbb{P}\left( \underset{\ell = r+1}{\overset{q}{\cap}} \left\{ \underset{k=r+1}{\overset{\ell}{\sum}} \langle Y, u_k \rangle ^2 < K\sigma^2(\ell-r) \right\} \right) &= \mathbb{P}\bigg( \underset{\ell = r+1}{\overset{q}{\cap}} \Big\{ \underset{k=r+1}{\overset{\ell}{\sum}} \Tilde{Z_k}^2 < K\sigma^2(\ell-r) \Big\} \bigg), \\
    &\hspace{-0.5 cm} \text{where} \ \ \Tilde{Z_k} \overset{i.i.d.}{\sim} \mathcal{N}(0,\sigma^2) \\
    \mathbb{P}\left( \underset{\ell = r+1}{\overset{q}{\cap}} \left\{ \underset{k=r+1}{\overset{\ell}{\sum}} \langle Y, u_k \rangle ^2 < K\sigma^2(\ell-r) \right\} \right) &= \mathbb{P}\bigg( \underset{\ell = r+1}{\overset{q}{\cap}} \Big\{ \underset{k=r+1}{\overset{\ell}{\sum}} Z_k^2 < K(\ell-r) \Big\} \bigg), \\
    &\hspace{-0.5 cm} \text{where} \ \ Z_k \overset{i.i.d.}{\sim} \mathcal{N}(0,1). \\
\end{align*}
Hence, for all $K>0$ and for each $r \in \{D_{m^*}+1, \cdots,q\}$, \\
$\mathbb{P}\left( \underset{\ell = r+1}{\overset{q}{\cap}} \left\{ \underset{k=r+1}{\overset{\ell}{\sum}} \langle Y, u_k \rangle ^2 < K\sigma^2(\ell-r) \right\} \right)$ does not depend on the data and we deduce the Formula~(\ref{FDR_1}) with~: 
\begin{align*}
        P_r(K) &= \mathbb{P}\bigg( \underset{\ell = r+1}{\overset{q}{\cap}} \Big\{ \underset{k=r+1}{\overset{\ell}{\sum}} Z_k^2 < K(\ell-r) \Big\} \bigg), \\
        Q_r(K,\beta^*,\sigma^2) &= \mathbb{P}\bigg( \underset{\ell = 0}{\overset{r-1}{\cap}} \Big\{ \underset{k=\ell+1}{\overset{r}{\sum}} \langle Y, u_k \rangle^2 > K\sigma^2(r-\ell) \Big\} \bigg),
\end{align*}
where $Z_k \overset{i.i.d.}{\sim} \mathcal{N}(0,1), \ \forall k \in \{r+1,\cdots,q\}$.
\qed

\subsection{General bounds.}
\label{proof_general_bounds}
\textbf{\textit{Proof of Theorem~\ref{theorem}}.}\\
We start from~(\ref{FDR_1}). 

\vspace{0.3 cm}
- \textbf{\textit{bounds on the $Q_r$ terms}.}\\
For all $K>0$ and for each $r \in \{D_{m^*}+1, \cdots,q\}$, we recall that~: 
\begin{equation*}
     Q_r(K,\beta^*,\sigma^2) = \mathbb{P}\left( \underset{\ell = 0}{\overset{r-1}{\cap}} \left\{ \underset{k=\ell+1}{\overset{r}{\sum}} \langle Y,u_k \rangle^2 > K\sigma^2(r-\ell) \right\} \right),
\end{equation*}
and since $\langle X \beta^*, u_k \rangle = 0, \forall k > D_{m^*}$, we have~:
\begin{align}
     &Q_r(K,\beta^*,\sigma^2) \notag \\ 
     &= \mathbb{P}\left( \underset{\ell = 0}{\overset{r-1}{\cap}} \left\{ \underset{k=\ell+1}{\overset{r}{\sum}} \Big( \langle \varepsilon, u_k \rangle^2 \ind{k > D_{m^*}} + \langle Y, u_k \rangle^2 \ind{k \leq D_{m^*}} \Big) > K\sigma^2(r-\ell) \right\} \right) \notag \\
    &= \mathbb{P}\bigg( \Big\{ \langle \varepsilon,u_r \rangle^2 > K \sigma^2 \Big\} \cap \cdots \cap \Big\{ \langle \varepsilon,u_r \rangle^2 + \cdots + \langle \varepsilon,u_{D_{m^*}+1} \rangle^2 > K \sigma^2 (r-D_{m^*}) \Big\} \notag \\
    & \cap \Big\{ \langle \varepsilon,u_r \rangle^2 + \cdots + \langle \varepsilon,u_{D_{m^*}+1} \rangle^2 + \langle Y,u_{D_{m^*}} \rangle^2 > K \sigma^2 (r-D_{m^*}+1) \Big\} \cap \cdots \notag \\
    & \cap \Big\{ \langle \varepsilon,u_r \rangle^2 + \cdots + \langle \varepsilon,u_{D_{m^*}+1} \rangle^2 + \langle Y,u_{D_{m^*}} \rangle^2 + \cdots + \langle Y,u_1 \rangle^2 > K \sigma^2 r \Big\} \bigg) \notag \\
    &= \mathbb{P} \Bigg(\Big\{ c_r > K \sigma^2 \Big\} \cap \Big\{ c_r + c_{r-1} > 2K \sigma^2 \Big\} \cap \cdots \cap \Big\{ c_r + c_{r-1} + \cdots + c_1  > rK \sigma^2 \Big\} \Bigg) 
    \label{complicated_P1}
\end{align}
where $c_\ell = \langle Y, u_\ell \rangle^2$ for $\ell \in \{1, \cdots D_{m^*}\}$ and $c_\ell = \langle \varepsilon, u_\ell \rangle^2$ for $\ell \in \{ D_{m^*}+1, \cdots, r \}$.

\vspace{0.2 cm}
\textbf{Lower bound on $Q_r(K,\beta^*,\sigma^2)$ for $r \in \{D_{m^*}+1, \cdots,q\}$}~:
\begin{lemma}
Let us consider an integer $s>1$, $K>0$ and $c_1, \cdots, c_s$ $s$ non-negative random independent quantities. We define by $E_\ell$ the event $\{ c_\ell > \ell K \sigma^2 \}$ for $\ell \in \{1, \cdots, s \}$ and by $F_\ell$ the event $\{ K \sigma^2 < c_\ell \leq \ell K \sigma^2 \}$ for $\ell \in \{2, \cdots, s\}$. \\
Then~: 
\begin{align*}
    & \Big\{c_s> K \sigma^2\Big\} \cap \Big\{c_s + c_{s-1} > 2K \sigma^2 \Big\} \cap \cdots \cap \Big\{c_s + c_{s-1} + \cdots + c_1 > s K \sigma^2 \Big\} \\
    & \supseteq E_s \sqcup \Bigg( F_s \sqcap \bigg(E_{s-1} \sqcup \Big( F_{s-1} \sqcap \big(E_{s-2} \sqcup \cdots \sqcup ( F_3 \sqcap (E_2 \sqcup (F_2 \sqcap E_1 ))) \big) \Big) \bigg) \Bigg),
\end{align*}
where $\cap$ and $\sqcap$ denote respectively any intersection and a disjoint intersection of events, as well as $\cup$ and $\sqcup$ denoting respectively any union and a disjoint union of events.
\label{Recurrence_lower}
\end{lemma}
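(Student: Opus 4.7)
The strategy is to expand the nested right-hand side into a disjoint union of $s$ pairwise incompatible events $G_1,\ldots,G_s$, and then to check that each $G_j$ is contained in the left-hand side by a direct comparison of partial sums to $kK\sigma^2$.

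First, I would unwind the recursion by introducing
\[
G_1 = E_s,\qquad G_j = F_s\cap F_{s-1}\cap\cdots\cap F_{s-j+2}\cap E_{s-j+1}\quad (j\in\{2,\ldots,s\}).
\]
A short induction on the depth of the nesting shows that the right-hand side of the lemma equals $\bigcup_{j=1}^{s} G_j$. Since $E_k$ and $F_k$ impose contradictory bounds on the single coordinate $c_k$ (namely $c_k>kK\sigma^2$ versus $c_k\leq kK\sigma^2$), for any $j<j'$ the event $G_j$ contains the factor $E_{s-j+1}$ while $G_{j'}$ contains $F_{s-j+1}$, hence $G_j\cap G_{j'}=\emptyset$. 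This justifies all the $\sqcup$ occurring in the statement, and every $\sqcap$ joins events depending on disjoint subsets of coordinates of $(c_1,\ldots,c_s)$, in accordance with the notational convention.

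Second, writing $A_k=\{c_s+c_{s-1}+\cdots+c_{s-k+1}>kK\sigma^2\}$ so that the LHS is $\bigcap_{k=1}^{s}A_k$, I would verify $G_j\subseteq A_k$ for every $j,k\in\{1,\ldots,s\}$ by splitting on whether $k\leq j-1$ or $k\geq j$. On $G_j$, each $F$-factor yields $c_i>K\sigma^2$ for $i\in\{s-j+2,\ldots,s\}$, and the $E$-factor yields $c_{s-j+1}>(s-j+1)K\sigma^2$. When $k\leq j-1$, the sum $c_s+\cdots+c_{s-k+1}$ consists of $k$ $F$-controlled terms, hence already exceeds $kK\sigma^2$. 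When $k\geq j$, the sum contains all $j-1$ $F$-terms together with $c_{s-j+1}$ (plus non-negative extras), so it is bounded below by $(j-1)K\sigma^2+(s-j+1)K\sigma^2=sK\sigma^2\geq kK\sigma^2$.

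The only real difficulty is book-keeping: the recursive presentation of the RHS obscures the clean enumeration $\{G_j\}$, and care must be taken to keep the depth index $j$ distinct from the partial-sum length $k$. The edge case $j=1$ (no $F$-factor, with the single constraint $c_s>sK\sigma^2$) has to be recorded separately, but is immediate since $c_s$ alone already dominates every $kK\sigma^2$ for $k\leq s$. Independence of the $c_i$'s is not actually used for the inclusion itself, only for the interpretation of the $\sqcap$ symbols.
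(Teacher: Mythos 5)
Your proof is correct, but it takes a genuinely different route from the paper. The paper proves the lemma by induction on $s$: it splits the first constraint $\{c_{s+1}>K\sigma^2\}$ into $E_{s+1}\sqcup F_{s+1}$, observes that on $E_{s+1}$ all remaining partial-sum constraints are automatic (non-negativity of the $c_i$), and on $F_{s+1}$ uses $c_{s+1}>K\sigma^2$ to reduce the remaining constraints to the $s$-term instance, to which the induction hypothesis applies. You instead flatten the nested right-hand side into the explicit disjoint union $\bigcup_{j=1}^{s}G_j$ with $G_1=E_s$ and $G_j=F_s\cap\cdots\cap F_{s-j+2}\cap E_{s-j+1}$, and verify $G_j\subseteq A_k$ directly for every pair $(j,k)$ by the two-case bound ($k\le j-1$: $k$ terms each exceeding $K\sigma^2$; $k\ge j$: the $E$-term alone contributes $(s-j+1)K\sigma^2$ and the $F$-terms the rest, giving $sK\sigma^2\ge kK\sigma^2$). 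Your computation is sound, the pairwise disjointness argument via the contradictory bounds $E_{s-j+1}$ versus $F_{s-j+1}$ is the right one, and your closing remark is accurate: independence plays no role in the set inclusion and only matters downstream, when the paper factors the probability of each $G_j$ as $\mathbb{P}(F_s)\cdots\mathbb{P}(F_{s-j+2})\mathbb{P}(E_{s-j+1})$. The trade-off is that your explicit enumeration makes the structure of the lower bound more transparent (and immediately yields the product formula used after the lemma), at the cost of heavier index bookkeeping, whereas the paper's induction is shorter to write but hides the list of disjoint pieces inside the recursion.
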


\begin{proof}
We prove Lemma~\ref{Recurrence_lower} by a recurrence on $s \geq1 $. \\
For $s=1$, both sets correspond to $E_1$, so the inclusion is obvious. Let $s\geq 1$ and suppose that the inclusion is true for $s$. With the definitions of $E_{s+1}$ and $F_{s+1}$, we obtain~: 
\begin{align*}
    & \Big\{c_{s+1}> K \sigma^2\Big\} \cap \Big\{c_{s+1} + c_s > 2K \sigma^2 \Big\} \cap \cdots \cap \Big\{c_{s+1} + c_s + \cdots + c_1 > (s+1)K \sigma^2 \Big\} \\
    &= \Bigg( E_{s+1} \sqcup F_{s+1} \Bigg) \\
    & \cap \Bigg( \Big\{c_{s+1} + c_s > 2K \sigma^2 \Big\} \cap \cdots \cap \Big\{c_{s+1} + c_s + \cdots + c_1 > (s+1)K \sigma^2 \Big\} \Bigg) \\
    & = \Bigg( E_{s+1} \cap \bigg( \Big\{c_{s+1} + c_s > 2K \sigma^2 \Big\} \cap \cdots \cap \Big\{c_{s+1} + c_s + \cdots + c_1 > (s+1)K \sigma^2 \Big\} \bigg) \Bigg) \\
    & \sqcup \Bigg( F_{s+1} \cap \bigg( \Big\{c_{s+1} + c_s > 2K \sigma^2 \Big\} \cap \cdots \cap \Big\{c_{s+1} + c_s + \cdots + c_1 > (s+1)K \sigma^2 \Big\} \bigg) \Bigg) \\
    & \underset{\textit{(*)}}{=} E_{s+1}  \\
    & \sqcup \Bigg( F_{s+1} \cap \bigg(  \Big\{c_{s+1} + c_s > 2K \sigma^2 \Big\} \cap \cdots \cap \Big\{c_{s+1} + c_s + \cdots + c_1 > (s+1)K \sigma^2 \Big\} \bigg) \Bigg) \\
    & \underset{\textit{(**)}}{\supseteq}  E_{s+1}  \sqcup \Bigg( F_{s+1}  \\
    & \cap \bigg( \Big\{c_s > K \sigma^2 \Big\} \cap \Big\{ c_s + c_{s-1} > 2K \sigma^2\Big\} \cap \cdots \cap \{ c_s + c_{s-1} + \cdots + c_1 > sK \sigma^2 \} \bigg) \Bigg) \\
    & \underset{\textit{(***)}}{\supseteq}  E_{s+1} \sqcup \Bigg( F_{s+1} \cap \bigg( E_s \sqcup \Big( F_s \sqcap \big( E_{s-1} \sqcup \cdots \sqcup \left( F_3 \sqcap (E_3 \sqcup (F_2 \sqcap E_1)) \right) \big) \Big) \bigg) \Bigg) \\
    & \underset{\textit{(****)}}{\supseteq}  E_{s+1} \sqcup \Bigg( F_{s+1} \sqcap \bigg( E_s \sqcup \Big( F_s \sqcap \big(E_{s-1} \sqcup \cdots \sqcup \left( F_3 \sqcap (E_3 \sqcup (F_2 \sqcap E_1)) \right) \big) \Big) \bigg) \Bigg).
\end{align*}
\textit{(*)} is true since $c_i$ are non-negative for all $i \in \{1,\cdots,s+1\}$ providing that $E_{s+1} \subset \bigg( \Big\{c_{s+1} + c_s > 2K \sigma^2 \Big\} \cap \cdots \cap \Big\{c_{s+1} + c_s + \cdots + c_1 > (s+1)K \sigma^2 \Big\} \bigg)$, \textit{(**)} comes from the inclusion $\Big\{c_{s+1} > K \sigma^2 \Big\} \subset F_{s+1}$. We obtain \textit{(***)} by applying the recurrence assumption at the step $s$. Independence of $c_1,\cdots,c_{s+1}$ provides the independence between $F_{s+1}$ and $\bigg( E_s \sqcup \Big( F_s \sqcap \big( E_{s-1} \sqcup \cdots \sqcup \left( F_3 \sqcap (E_3 \sqcup (F_2 \sqcap E_1)) \right) \big) \Big) \bigg)$ which gets \textit{(****)}. \\
Thus, the property is true for $s+1$, which proves lemma. 
\end{proof}

\vspace{0.3 cm}
By applying Lemma~\ref{Recurrence_lower} on Formula~(\ref{complicated_P1}) with $s=r$, we obtain~:
\begin{align*}
    &Q_r(K,\beta^*,\sigma^2) \geq \mathbb{P}(E_r) \\
    & \quad + \mathbb{P}(F_r) \Bigg(  \mathbb{P}(E_{r-1}) +  \mathbb{P}(F_{r-1}) \bigg( \mathbb{P}(E_{r-2}) + \cdots + \mathbb{P}(F_3) \Big( \mathbb{P}(E_2) + \mathbb{P}(F_2)  \mathbb{P}(E_1) \Big) \bigg) \Bigg).
\end{align*}
By using that $\langle Y, u_\ell\rangle \sim \mathcal{N}(\langle X \beta^*, u_\ell \rangle,\sigma^2)$ for $\ell \in \{1,\cdots,D_{m^*}\}$ and $\langle \varepsilon, u_\ell\rangle \in \mathcal{N}(0,\sigma^2)$ for $\ell \in \{1,\cdots,r\}$, we get~: \\
For $\ \ell \in \{1, \cdots, D_{m^*}\}~:$
\begin{align*}
    \mathbb{P}(E_\ell)&= \mathbb{P}\bigg(\Big\{\langle Y, u_\ell\rangle^2 > \ell K \sigma^2\Big\}\bigg) \\
    &= 2 - \bigg( \Phi\Big(\sqrt{\ell K}-\frac{\langle X \beta^*, u_\ell \rangle}{\sigma}\Big) + \Phi\Big(\sqrt{\ell K}+\frac{\langle X \beta^*, u_\ell \rangle}{\sigma}\Big) \bigg) \\
    &= G_{\ell}.
\end{align*}
For $\ \ell \in \{2, \cdots, D_{m^*}\}~:$
\begin{align*}
    \mathbb{P}(F_\ell) &= \mathbb{P}\bigg(\Big\{K \sigma^2 < \langle Y, u_\ell \rangle^2 \leq \ell K \sigma^2\Big\}\bigg)\\
     &= \Phi\Big(\sqrt{\ell K}-\frac{\langle X \beta^*, u_\ell \rangle}{\sigma}\Big) + \Phi\Big(\sqrt{\ell K}+ \frac{\langle X \beta^*, u_\ell \rangle}{\sigma}\Big) \\
    & \quad \quad - \bigg(\Phi\Big(\sqrt{K}-\frac{\langle X \beta^*, u_\ell \rangle}{\sigma}\Big) + \Phi\Big(\sqrt{K}+\frac{\langle X \beta^*, u_\ell \rangle}{\sigma}\Big)\bigg) \\
     &= H_{\ell}.
\end{align*}
For $\ \ell \in \{D_{m^*}+1, \cdots, r\}~:$
\begin{align*}
    \mathbb{P}(E_\ell) &= \mathbb{P}\bigg(\Big\{\langle \varepsilon, u_\ell\rangle^2 > \ell K \sigma^2\Big\}\bigg) \\
    &= 2 \bigg(1-\Phi\big(\sqrt{\ell K}\big)\bigg)  \\
    &= G_{\ell}, \\
    \mathbb{P}(F_\ell) &= \mathbb{P}\bigg(\Big\{K \sigma^2 < \langle \varepsilon, u_\ell\rangle^2 \leq \ell K \sigma^2\Big\}\bigg) \\
    &= 2 \bigg( \Phi\big(\sqrt{\ell K}\big) - \Phi\big(\sqrt{K}\big)\bigg) \\
    &= H_{\ell}.
\end{align*}
Hence, a lower bound on $Q_r(K,\beta^*,\sigma^2)$ is obtained for all $K>0$~:
\begin{equation}
        \underline{f}_r(K,\beta^*,\sigma^2) \leq Q_r(K,\beta^*,\sigma^2)
            \label{first_term_lower}
\end{equation}
with~: 
\begin{align}
    \underline{f}_r(K,\beta^*,\sigma^2) &= G_{r} + H_{r} \ \underline{f}_{r-1}(K,\beta^*,\sigma^2) \notag \\
    &\text{and} \quad \underline{f}_1(K,\beta^*,\sigma^2) = G_1.
    \label{FDR_6}
\end{align}

\vspace{0.3 cm}

\vspace{0.32 cm}
\textbf{Upper bound on $Q_r(K,\beta^*,\sigma^2)$ for $r \in \{D_{m^*}+1, \cdots,q\}$}~: \\
By using definitions of Lemma~\ref{Recurrence_lower} and formula~(\ref{complicated_P1}), we get~:
\begin{align}
    Q_r(K,\beta^*,\sigma^2) 
    &\leq  \min \Bigg( \mathbb{P}\bigg(\Big\{c_r>K  \sigma^2\Big\}\bigg),\mathbb{P}\bigg(\Big\{ c_r + c_{r-1} > 2K \sigma^2\Big\}\bigg), \cdots,  \notag \\
    &\quad \quad \quad \quad \mathbb{P} \bigg(\Big\{ c_r+c_{r-1}+\cdots +c_1 > rK \sigma^2 \Big\}\bigg) \Bigg).
    \label{FDR_4}
\end{align}
Since $\langle \varepsilon, u_i \rangle_{i \in \{D_{m^*}+1,\cdots,r\}} \overset{\text{i.i.d.}}{\sim} \mathcal{N}\big(0,\sigma^2\big)$, we have $\text{for all} \ j \in \{D_{m^*}+1,\cdots, r\}$~:
\begin{equation}
   \mathbb{P}\bigg(\Big\{ c_r + \cdots + c_j > (r-j+1)K \sigma^2\Big\}\bigg) =  1 - F_{\chi^2(r-j+1)}\Big((r-j+1)K\Big).
   \label{temp_1}
\end{equation}

\vspace{0.4 cm}
For all $j \in \{1,\cdots,D_{m^*}\}$, 
\begin{align*}
    &\mathbb{P}\bigg(\Big\{ c_r + \cdots + c_j > (r-j+1)K \sigma^2\Big\}\bigg) \\
    &= \mathbb{P}\bigg(\Big\{ c_r + \cdots + c_{D_{m^*}+1} + c_{D_{m^*}} + \cdots + c_j > (r-j+1)K \sigma^2\Big\}\bigg)  \\
    &= \mathbb{P}\bigg(\Big\{  c_r + \cdots + c_{D_{m^*}+1} + \big( \langle X\beta^*,u_{D_{m^*}} \rangle + \langle \varepsilon, u_{D_{m^*}} \rangle \big)^2 + \cdots \\
    & \hspace{4 cm} + \big( \langle X\beta^*,u_{j} \rangle + \langle \varepsilon, u_{j} \rangle\big)^2 > (r-j+1)K \sigma^2\Big\}\bigg)   \\
    &\underset{\textit{(**)}}{\leq} \mathbb{P}\bigg(\Big\{ c_r + \cdots + c_{D_{m^*}+1} + 2\langle X\beta^*,u_{D_{m^*}} \rangle^2 + 2 \langle \varepsilon, u_{D_{m^*}} \rangle^2 + \cdots  \\
    &\hspace{4 cm} + 2\langle X\beta^*,u_{j} \rangle^2 + 2 \langle \varepsilon, u_{j} \rangle^2 > (r-j+1)K \sigma^2\Big\}\bigg)  \\
    &\leq \mathbb{P}\bigg(\Big\{ 2 c_r + \cdots + 2 c_{D_{m^*}+1} + 2 \langle \varepsilon, u_{D_{m^*}} \rangle^2 + \cdots + 2 \langle \varepsilon, u_{j} \rangle^2 > (r-j+1)K \sigma^2   \\
    &\hspace{4 cm} - 2\langle X\beta^*,u_{D_{m^*}} \rangle^2 - \cdots - 2\langle X\beta^*,u_{j} \rangle^2 \Big\}\bigg)\\
    &\underset{\textit{(***)}}{=} \mathbb{P}\bigg(\Big\{ 2 \sigma^2 Z_r^2 + \cdots + 2 \sigma^2 Z_{D_{m^*}+1}^2 + 2 \sigma^2 Z_{D_{m^*}}^2 + \cdots + 2 \sigma^2 Z_{j}^2  \\
    & \hspace{3 cm} > (r-j+1)K \sigma^2 - 2\langle X\beta^*,u_{D_{m^*}} \rangle^2 - \cdots - 2\langle X\beta^*,u_{j} \rangle^2 \Big\}\bigg),  \\
    & \hspace{2 cm} \text{where} \ (Z_\ell)_{\ell \in \{j,\cdots,r\}} \overset{i.i.d}{\sim} \mathcal{N}(0,1). \\
    & = \mathbb{P}\bigg(\Big\{ Z_r^2 + \cdots + Z_{D_{m^*}+1}^2 + Z_{D_{m^*}}^2 + \cdots + Z_{j}^2  \\
    & \hspace{3 cm} > \frac{(r-j+1)K}{2} - \frac{\langle X\beta^*,u_{D_{m^*}} \rangle^2}{\sigma^2} - \cdots - \frac{\langle X\beta^*,u_{j} \rangle^2}{\sigma^2} \Big\}\bigg) \\
\end{align*}
\begin{align}
    & = \mathbb{P}\bigg(\Big\{ X > \frac{(r-j+1)K}{2} - \frac{\langle X\beta^*,u_{D_{m^*}} \rangle^2}{\sigma^2} - \cdots - \frac{\langle X\beta^*,u_{j} \rangle^2}{\sigma^2} \Big\}\bigg), \notag \\
    & \hspace{2 cm} \text{for} \ X \sim \chi^2(r-j+1) \notag  \\
    & = 1 - F_{\chi^2(r-j+1)}\bigg(\frac{(r-j+1)K}{2} - \frac{\langle X\beta^*,u_{D_{m^*}} \rangle^2}{\sigma^2} - \cdots - \frac{\langle X\beta^*,u_{j} \rangle^2}{\sigma^2} \bigg).
   \label{temp_2}
\end{align}
\textit{(**)} provides from $(a+b)^2 \leq 2(a^2 + b^2), \ \forall (a,b) \in \mathbb{R}$ and \textit{(***)} is true since $\langle \varepsilon, u_i \rangle_{i \in \{1,\cdots,r\}} \overset{\text{i.i.d.}}{\sim} \mathcal{N}\big(0,\sigma^2\big)$. \\

So, from~(\ref{FDR_4}),~(\ref{temp_1}) and ~(\ref{temp_2}), we deduce that for all $K>0$ and for each $r \in \{D_{m^*}+1, \cdots,q\}$~:
\begin{align*}
    &Q_r(K,\beta^*,\sigma^2) \leq \\
    &\quad \quad \min \Bigg( 1- F_{\chi^2(1)}(K), \cdots, 1- F_{\chi^2(r-D_{m^*})}\big((r-D_{m^*})K\big), \\
    &\quad \quad 1- F_{\chi^2(r-D_{m^*}+1)}\bigg(\frac{(r-D_{m^*}+1)K}{2} - \frac{\langle X\beta^*,u_{D_{m^*}} \rangle^2}{\sigma^2}\bigg), \\
    &\quad \quad 1- F_{\chi^2(r-D_{m^*}+2)}\bigg(\frac{(r-D_{m^*}+2)K}{2} - \frac{\langle X\beta^*,u_{D_{m^*}} \rangle^2}{\sigma^2} - \frac{\langle X\beta^*,u_{D_{m^*-1}} \rangle^2}{\sigma^2}\bigg), \\
    &\quad \quad  \cdots, \\
    &\quad \quad 1- F_{\chi^2(r)}\bigg(\frac{rK}{2} - \frac{\langle X\beta^*,u_{D_{m^*}}\rangle^2}{\sigma^2} - \frac{\langle X\beta^*,u_{D_{m^*-1}} \rangle^2}{\sigma^2} - \cdots - \frac{\langle X\beta^*,u_1 \rangle^2}{\sigma^2}\bigg)\Bigg).
\end{align*}
Hence, an upper bound on $Q_r(K,\beta^*,\sigma^2)$ is obtained for all $K>0$~:
\begin{equation}
    Q_r(K,\beta^*,\sigma^2) \leq \overline{f}_r(K,\beta^*,\sigma^2))
    \label{first_term_upper}
\end{equation}
with~: 
\begin{align}
    \overline{f}_r(K,\beta^*,\sigma^2) = 1-\max\Bigg(&\underset{\ell \in \{1,\cdots,r-D_{m^*}\}}{\max}\Big(F_{\chi^2(\ell)}(\ell K)\Big), \notag \\
    &\underset{\ell \in \{r-D_{m^*}+1,\cdots,r\}}{\max}\bigg(F_{\chi^2(\ell)}\Big(\frac{\ell K}{2} - \underset{k=r-\ell+1}{\overset{D_{m^*}}{\sum}}\frac{\langle X\beta^*,u_k \rangle^2}{\sigma^2}\Big)\bigg)\Bigg).
    \label{FDR_3}
\end{align}

- \textbf{\textit{bounds on the FDR}.}\\
By combining~(\ref{FDR_1}),~(\ref{first_term_lower}),~(\ref{FDR_6}),~(\ref{first_term_upper}),~(\ref{FDR_3}) and~(\ref{second_term}), we obtain~:
\begin{equation*}
    \sum\limits_{r = D_{m^*}+1}^q \Bigg(\frac{r-D_{m^*}}{r} P_r(K) \underline{f}_r(K,\beta^*,\sigma^2) \Bigg) \leq \text{FDR}(\Hat{m}(K))
\end{equation*}
and
\begin{equation*}
    \text{FDR}(\Hat{m}(K))
    \leq \sum\limits_{r = D_{m^*}+1}^q \Bigg( \frac{r-D_{m^*}}{r} P_r(K) \overline{f}_r(K,\beta^*,\sigma^2) \Bigg),
\end{equation*}
which allows us to obtain Theorem~\ref{theorem} with $\forall K>0$,
\begin{equation*}
    b(K,\beta^*,\sigma^2) = \sum\limits_{r = D_{m^*}+1}^q \Bigg(\frac{r-D_{m^*}}{r} P_r(K)  \underline{f}_r(K,\beta^*,\sigma^2)\Bigg)
\end{equation*}
and 
\begin{equation*}
    B(K,\beta^*,\sigma^2) = \sum\limits_{r = D_{m^*}+1}^q \Bigg( \frac{r-D_{m^*}}{r} P_r(K) \overline{f}_r(K,\beta^*,\sigma^2) \Bigg).
\end{equation*}
\qed

\subsection{Strictly positive FDR.}
\label{proof_no_noise}
\textbf{\textit{Proof of Corollary~\ref{positive_FDR}}.}\\
From Theorem~\ref{theorem}, we have $\forall K>0$,
\begin{equation}
    \text{FDR}(\Hat{m}(K)) \geq \sum\limits_{r = D_{m^*}+1}^q \Bigg(\frac{r-D_{m^*}}{r} P_r(K) \ \underline{f}_r(K,\beta^*,\sigma^2) \Bigg).
    \label{lower_bound_explicite}
\end{equation}
For the rest of the proof, we use the following Lemma~:
\begin{lemma}[Frank R. Kschischang \cite{kschischang2017complementary}]
The complementary error function, $\text{erfc}(x)$, is defined, for $x \geq 0$, as~:
\begin{equation*}
    \text{erfc}(x) = 2 \Big( 1- F_{\mathcal{N}(0,\frac{1}{2})}(x)\Big)
\end{equation*}
where $F_{\mathcal{N}(0,\frac{1}{2})}$ denotes the cumulative function of the centered Gaussian with the variance equals $\frac{1}{2}$. \\
Then, 
\begin{equation*}
    \forall x \geq 0, \quad \quad \frac{2 e^{-x^2}}{\sqrt{\pi}\big(x+\sqrt{x^2+2}\big)} \leq \text{erfc}(x) \leq \frac{e^{-x^2}}{\sqrt{\pi}x}.
\end{equation*}
\label{erfc}
\end{lemma}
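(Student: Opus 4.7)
The plan is to prove the two inequalities separately. For the upper bound, I would use the elementary observation that $e^{-t^2} \le \frac{t}{x} e^{-t^2}$ for every $t \ge x > 0$, which gives
\begin{equation*}
\text{erfc}(x) \;=\; \frac{2}{\sqrt{\pi}} \int_x^{\infty} e^{-t^2}\,dt \;\le\; \frac{2}{\sqrt{\pi}\,x}\int_x^{\infty} t\, e^{-t^2}\,dt \;=\; \frac{e^{-x^2}}{\sqrt{\pi}\,x},
\end{equation*}
since $\int_x^{\infty} t e^{-t^2}\,dt = e^{-x^2}/2$. This handles the right-hand inequality in the lemma.

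For the lower bound, I would introduce the auxiliary function
\begin{equation*}
f(x) \;=\; \text{erfc}(x) - \frac{2 e^{-x^2}}{\sqrt{\pi}\bigl(x+\sqrt{x^2+2}\bigr)}
\end{equation*}
and prove $f(x) \ge 0$ on $[0,\infty)$ by combining three facts: (a) $f(0) = 1 - \sqrt{2/\pi} > 0$, (b) $\lim_{x\to\infty} f(x) = 0$ (both $\text{erfc}(x)$ and its proposed lower bound are asymptotically equivalent to $e^{-x^2}/(\sqrt{\pi}\,x)$ since $x+\sqrt{x^2+2}\sim 2x$), and (c) $f$ is strictly decreasing on $[0,\infty)$. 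A strictly decreasing function with limit $0$ at $+\infty$ and nonnegative initial value is nonnegative everywhere, which gives the claim.

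The main work is in (c). Differentiating, $(\text{erfc})'(x) = -\frac{2}{\sqrt{\pi}}e^{-x^2}$, and a direct computation using $\tfrac{d}{dx}(x+\sqrt{x^2+2}) = (x+\sqrt{x^2+2})/\sqrt{x^2+2}$ gives, after putting everything over a common denominator,
\begin{equation*}
f'(x) \;=\; \frac{2 e^{-x^2}\bigl(x\sqrt{x^2+2}-(x^2+1)\bigr)}{\sqrt{\pi}\,(x+\sqrt{x^2+2})\sqrt{x^2+2}}.
\end{equation*}
The sign of $f'(x)$ is therefore the sign of $x\sqrt{x^2+2}-(x^2+1)$. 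Squaring the two nonnegative quantities yields $x^2(x^2+2) = x^4+2x^2$ versus $(x^2+1)^2 = x^4+2x^2+1$, so $x^2+1 > x\sqrt{x^2+2}$ and $f'(x) < 0$ throughout $[0,\infty)$.

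The only slightly delicate step is the algebraic simplification leading to the compact form of $f'(x)$; the identity $(x+\sqrt{x^2+2})(\sqrt{x^2+2}-x) = 2$, obtained by expanding the product, collapses several intermediate expressions and makes the final sign argument transparent. Once $f$ is shown to be decreasing with $f(0) > 0$ and $f(\infty) = 0$, the lower bound $\text{erfc}(x) \ge \frac{2 e^{-x^2}}{\sqrt{\pi}(x+\sqrt{x^2+2})}$ follows immediately, completing the proof of the lemma.
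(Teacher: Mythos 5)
Your proof is correct, but note that the paper itself does not prove this lemma at all: it is imported verbatim from the cited note of Kschischang and used as a black box in the proofs of Corollaries \ref{positive_FDR} and \ref{asympt_K}. So there is no "paper proof" to compare against; what you have supplied is a self-contained verification of the external result. Both halves check out. The upper bound via $e^{-t^2}\leq \frac{t}{x}e^{-t^2}$ for $t\geq x>0$ and $\int_x^{\infty}t e^{-t^2}\,dt=\tfrac{1}{2}e^{-x^2}$ is the standard argument (at $x=0$ the right-hand bound is infinite, so the inequality is vacuous there, which is consistent with the statement). For the lower bound, I verified your derivative computation: writing $g(x)=\frac{2e^{-x^2}}{\sqrt{\pi}(x+\sqrt{x^2+2})}$ one gets
\begin{equation*}
f'(x)=\frac{2e^{-x^2}}{\sqrt{\pi}}\cdot\frac{x\sqrt{x^2+2}-(x^2+1)}{\bigl(x+\sqrt{x^2+2}\bigr)\sqrt{x^2+2}},
\end{equation*}
and since $x^2(x^2+2)=x^4+2x^2<x^4+2x^2+1=(x^2+1)^2$, the numerator is strictly negative, so $f$ is strictly decreasing; combined with $f(x)\to 0$ as $x\to\infty$ this already forces $f>0$ everywhere (the check $f(0)=1-\sqrt{2/\pi}>0$ is redundant but harmless). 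One tiny presentational remark: the identity $(x+\sqrt{x^2+2})(\sqrt{x^2+2}-x)=2$ that you invoke is not actually needed to reach the displayed form of $f'$; the common-denominator computation alone suffices. The argument is complete and could stand in place of the citation.
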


\vspace{0.5 cm}
We remark that for all $x \geq 0$, $1-\Phi(x) = \frac{1}{2} \text{erfc}\big(\frac{x}{\sqrt{2}}\big)$. Then, for each $r\in\{D_{m^*}+1,\cdots,q\}$, 
\begin{align}
    \underline{f}_r(K,\beta^*,\sigma^2) &= G_{r} + H_{r} \Big( G_{r-1} +  H_{r-1} \big( G_{r-2} +  \cdots + H_{2}  G_{1} \big) \Big) \notag \\
    & \geq G_{r} \notag \\
    & = 2 \bigg(1-\Phi\big(\sqrt{r K}\big)\bigg) \notag \\
    & = \text{ercf}\Big(\sqrt{\frac{rK}{2}}\Big) \notag \\
    & \underset{\textit{(**)}}{\geq} \frac{2}{\sqrt{\pi} \Big(\sqrt{\frac{rK}{2}}+\sqrt{\frac{rK}{2}+2}\Big)}  e^{-\frac{rK}{2}} \notag \\
    & = \frac{2\sqrt{2}}{\sqrt{\pi}\Big(\sqrt{rK}+\sqrt{rK+4}\Big)} e^{-\frac{rK}{2}}.
    \label{CV_3}
\end{align}
\textit{(**)} is provided by Lemma \ref{erfc}. So, from~(\ref{lower_bound_explicite}) and~(\ref{CV_3}), we obtain~: 
\begin{equation*}
    \forall K > 0, \ \ \text{FDR}(\Hat{m}(K)) \geq \sum\limits_{r = D_{m^*}+1}^q \Bigg(\frac{r-D_{m^*}}{r} P_r(K) \frac{2\sqrt{2}}{\sqrt{\pi}\Big(\sqrt{rK}+\sqrt{rK+4}\Big)} e^{-\frac{rK}{2}} \Bigg).
\end{equation*}
This lower bound is strictly positive and since the $P_r(K)$ terms are all strictly positive too, we deduce that the $\text{FDR}$ function is a strictly positive function.
\qed

\vspace{1 cm}
\subsection{Asymptotic analysis.}
\label{proof_asymptotic}
\textbf{\textit{Proof of Corollary~\ref{asympt_K}}.}\\
For all $r \in \{D_{m^*} +1, \cdots, q\}$ and by using the definitions from Theorem~\ref{theorem}, \\
for $\ \ell \in \{1, \cdots, D_{m^*}\}~:$
\begin{align*}
    &G_{\ell} = 2 - \bigg( \Phi\Big(\sqrt{\ell K}-\frac{\langle X \beta^*, u_\ell \rangle}{\sigma} \rangle\Big) + \Phi\Big(\sqrt{\ell K}+\frac{\langle X \beta^*, u_\ell \rangle}{\sigma}\Big) \bigg) \\ 
    &\quad \quad \quad \quad \quad \underset{K \longrightarrow + \infty}{\longrightarrow}  0; 
\end{align*}   
for $\ \ell \in \{2, \cdots, D_{m^*}\}~:$
\begin{align*}
    &H_{\ell} = \Phi\Big(\sqrt{\ell K}-\frac{\langle X \beta^*, u_\ell \rangle}{\sigma}\Big) + \Phi\Big(\sqrt{\ell K}+ \frac{\langle X \beta^*, u_\ell \rangle}{\sigma}\Big) - \\
    &\quad \quad \quad \quad \bigg(\Phi\Big(\sqrt{K}-\frac{\langle X \beta^*, u_\ell \rangle}{\sigma}\Big) + \Phi\Big(\sqrt{K}+\frac{\langle X \beta^*, u_\ell \rangle}{\sigma}\Big)\bigg) \\
    &\quad \quad \quad \quad \quad \underset{K \longrightarrow + \infty}{\longrightarrow}  0;
\end{align*}
and for $ \ \ell \in \{D_{m^*}+1, \cdots, r\}~: $
\begin{align*}
     &G_{\ell} = 2 \bigg(1-\Phi\big(\sqrt{\ell K}\big)\bigg) \underset{K \longrightarrow + \infty}{\longrightarrow} 0, \\
     &H_{\ell} = 2 \bigg( \Phi\big(\sqrt{\ell K}\big) - \Phi\big(\sqrt{K}\big)\bigg) \underset{K \longrightarrow + \infty}{\longrightarrow}  0;
\end{align*}
which provides that $\underline{f}_r(K,\beta^*,\sigma^2) \underset{K \longrightarrow + \infty}{\longrightarrow} 0$. \\
Moreover, $\overline{f}_r(K,\beta^*,\sigma^2)) \underset{K \longrightarrow + \infty}{\longrightarrow} 0$. So, $Q_r(K,\beta^*,\sigma^2) \underset{K \longrightarrow + \infty}{\longrightarrow} 0$. 
In the same way, $P_r(K) \underset{K \longrightarrow + \infty}{\longrightarrow} 1$. So, $P_r(K) Q_r(K,\beta^*,\sigma^2)  \underset{K \longrightarrow + \infty}{\longrightarrow} 0$. \\
Finally, for each $r \in \{D_{m^*} +1, \cdots, q\}$, we deduce from~(\ref{FDR_1}) that 
\begin{equation*}
    \text{FDR}(\Hat{m}(K)) \underset{K \longrightarrow + \infty}{\longrightarrow} 0.
\end{equation*}

\vspace{0.3 cm}
For each $r \in \{D_{m^*}+1,\cdots,q\}$ $P_r(K) \underset{K \longrightarrow + \infty}{\longrightarrow}  1$, we deduce that for all $C_1 \in ]0,1[$, there exists $\Tilde{L}_{C_1}>0$ such that $\forall K > \Tilde{L}_{C_1}$ and $\forall r \in \{D_{m^*}+1,\cdots,q\}$, we have $C_1 \leq P_r(K)$. For the following, we fix $C_1 \in ]0,1[$. \\
By using~(\ref{first_term_lower}),~(\ref{first_term_upper}) and $P_r(K) \leq 1$  for each $r \in \{D_{m^*}+1,\cdots,q\}$, we deduce that~:
\begin{equation}
    \forall K > \Tilde{L}_{C_1}, \quad \quad \text{FDR}(\Hat{m}(K)) \geq C_1 \sum\limits_{r = D_{m^*}+1}^q \Bigg(\frac{r-D_{m^*}}{r} \underline{f}_r(K,\beta^*,\sigma^2) \Bigg)
    \label{CV_1}
\end{equation}
and 
\begin{equation}
    \forall K >0, \quad \quad \text{FDR}(\Hat{m}(K)) \leq \sum\limits_{r = D_{m^*}+1}^q \Bigg( \frac{r-D_{m^*}}{r} \overline{f}_r(K,\beta^*,\sigma^2)) \Bigg).
    \label{CV_2}
\end{equation}

\vspace{1 cm}
- \textbf{Upper bound on $\overline{f}_r$}~: \\
For each $r\in\{D_{m^*}+1,\cdots,q\}$ and for all $K > 0$~:
\begin{align}
        \overline{f}_r(K,\beta^*,\sigma^2)) &= 1-\max\Bigg(\underset{\ell \in \{1,\cdots,r-D_{m^*}\}}{\max}\Big(F_{\chi^2(\ell)}(\ell K)\Big), \notag \\
        & \hspace{1 cm} \underset{\ell \in \{r-D_{m^*}+1,\cdots,r\}}{\max}\bigg(F_{\chi^2(\ell)}\Big(\frac{\ell K}{2} - \underset{k=r-\ell+1}{\overset{D_{m^*}}{\sum}}\frac{\langle X\beta^*,u_k \rangle^2}{\sigma^2}\Big)\bigg)\Bigg) \notag \\
        &= \min\Bigg(\underset{\ell \in \{1,\cdots,r-D_{m^*}\}}{\min}\Big(\mathbb{P}\big(X_\ell > \ell K\big)\Big), \notag \\
        & \hspace{0.4 cm} \underset{\ell \in \{r-D_{m^*}+1,\cdots,r\}}{\min}\bigg(\mathbb{P}\big(Y_\ell > \frac{\ell K}{2} - \underset{k=r-\ell+1}{\overset{D_{m^*}}{\sum}}\frac{\langle X\beta^*,u_k \rangle^2}{\sigma^2}\big)\bigg)\Bigg), \notag \\
        & \hspace{1.8 cm} \text{with} \ X_\ell \sim \chi^2(\ell) \ \text{and} \ Y_\ell \sim \chi^2(\ell) \notag \\
        &= \min\Bigg(\underset{\ell \in \{1,\cdots,r-D_{m^*}\}}{\min}\Big(\mathbb{P}\big(X_\ell - \ell > \ell K - \ell \big)\Big), \notag \\
        & \hspace{0.4 cm} \underset{\ell \in \{r-D_{m^*}+1,\cdots,r\}}{\min}\bigg(\mathbb{P}\Big(Y_\ell - \ell > \frac{\ell (K-2)}{2} - \underset{k=r-\ell+1}{\overset{D_{m^*}}{\sum}}\frac{\langle X\beta^*,u_k \rangle^2}{\sigma^2}\Big)\bigg)\Bigg), \notag \\
        & \hspace{1.8 cm} \text{with} \ X_\ell \sim \chi^2(\ell) \ \text{and} \ Y_\ell \sim \chi^2(\ell). 
        \label{finess_term}
\end{align}

\vspace{0.3 cm}
So, for each $r\in\{D_{m^*}+1,\cdots,q\}$ and for all $K > 0$~:
\begin{equation*}
    \overline{f}_r(K,\beta^*,\sigma^2)) \leq \underset{\ell \in \{1,\cdots,r-D_{m^*}\}}{\min}\Big(\mathbb{P}\big(X_\ell - \ell > \ell K - \ell \big)\Big), \quad \quad \text{with}  \ X_\ell \sim \chi^2(\ell).
\end{equation*}

By the exponential inequality of \cite{laurent2000adaptive} for $X \sim \chi^2(\ell)$ and $\ell \in \mathbb{N}^{*}$~:  
\begin{equation}
    \forall x \geq 0, \quad \mathbb{P}\Big(X - \ell > 2 \sqrt{\ell x} + 2x \Big) \leq e^{-x}.
    \label{exponential_inequality_chi}
\end{equation}
We apply~(\ref{exponential_inequality_chi}) for each $\ell=1,\cdots,(r-D_{m^*})$ with $x = \frac{\ell}{4} \Big(1 - \sqrt{2K - 1}\Big)^2$ which is one solution of $ \ 2 \sqrt{\ell x} + 2x = \ell K - \ell \ $ when $K>1$. We obtain for all $K>1$~:
\begin{align}
    \underset{\ell \in \{1,\cdots,r-D_{m^*}\}}{\min}\Big(\mathbb{P}\big(X_\ell - \ell > \ell K - \ell \big)\Big) &\leq \underset{\ell=1,\cdots,(r-D_{m^*})}{\min} \bigg( e^{-\frac{\ell}{4} \big(1 - \sqrt{2K - 1}\big)^2} \bigg) \notag \\
    &\leq e^{\frac{(r-D_{m^*})\sqrt{2K-1}}{2}} e^{-\frac{(r-D_{m^*}) K}{2}}.
    \label{CV_4}
\end{align}

So, from~(\ref{CV_2}) and~(\ref{CV_4}), we obtain for each $r\in\{D_{m^*}+1,\cdots,q\}$ and for all $K>1$~: 
\begin{align*}
    \text{FDR}(\Hat{m}(K)) & \leq \sum\limits_{r = D_{m^*}+1}^q \Bigg( \frac{r-D_{m^*}}{r} e^{\frac{(r-D_{m^*})\sqrt{2K-1}}{2}} e^{-\frac{(r-D_{m^*}) K}{2}} \Bigg) \\
    & \leq e^{-\frac{K}{2}} \sum\limits_{r = D_{m^*}+1}^q \Bigg( \frac{r-D_{m^*}}{r} e^{\frac{(r-D_{m^*})\sqrt{2K-1}}{2}} \Bigg).
\end{align*}
For all $\eta>0$ and $r\in\{D_{m^*}+1,\cdots,q\}$, $e^{\frac{(r-D_{m^*})\sqrt{2K-1}}{2}} = \underset{K \longrightarrow + \infty}{o}\Big(e^{\eta K}\Big)$. \\
Hence, $\forall \eta>0$
\begin{equation*}
    \text{FDR}(\Hat{m}(K)) = \underset{K \longrightarrow + \infty}{o}\Big(e^{-K(\frac{1}{2}-\eta)}\Big),
\end{equation*}
which allows to obtain~(\ref{CV_rate_K_infinity}). \\

\vspace{0.5 cm}
\textbf{Proof of Remark~\ref{remark_finest}}~: \\
The inequalities~(\ref{CV_1}) and~(\ref{CV_2}) are also true when $K \longrightarrow + \infty$ and $\sigma \longrightarrow 0 \ \text{with} \ \frac{1}{\sigma} = \underset{\sigma \longrightarrow 0}{o}(\sqrt{K})$. To obtain the finest asymptotic upper bound~(\ref{CV_rate_K_infinity_more_complex}), we start from the equation~(\ref{finess_term}) and we consider the second term. Similar to previously, we apply~(\ref{exponential_inequality_chi}) for each $\ell=r-D_{m^*}+1,\cdots,r$ with 
$$x = \frac{\ell}{4} \Bigg(1-\sqrt{K-1 - \frac{2}{\ell} \underset{k=r-\ell+1}{\overset{D_{m^*}}{\sum}}\frac{\langle X\beta^*,u_k \rangle^2}{\sigma^2}}\Bigg)^2,$$
which is one solution of 
$$ 2 \sqrt{\ell x} + 2x = \frac{\ell (K-2)}{2} - \underset{k=r-\ell+1}{\overset{D_{m^*}}{\sum}}\frac{\langle X\beta^*,u_k \rangle^2}{\sigma^2} $$ when $ \ \sigma^2(K-1)>\frac{2}{r-D_{m^*}+1} \underset{k=1}{\overset{D_{m^*}}{\sum}}\langle X\beta^*,u_k \rangle^2 +2$. This condition is valid since $\sigma \longrightarrow 0$ with $\frac{1}{\sigma} = \underset{\sigma \longrightarrow 0}{o}(\sqrt{K})$ leading to $\frac{1}{\sigma^2} = \underset{\sigma \longrightarrow 0}{o}(K)$ and so $\sigma^2(K-1) \longrightarrow + \infty$ when $K \longrightarrow + \infty$. We obtain for all $K>0$ such that $\sigma^2(K-1)>\frac{2}{r-D_{m^*}+1} \underset{k=1}{\overset{D_{m^*}}{\sum}}\langle X\beta^*,u_k \rangle^2+2$~:
\begin{align}
    \underset{\ell \in \{r-D_{m^*}+1,\cdots,r\}}{\min} &\bigg(\mathbb{P}\big(Y_\ell - \ell > \frac{\ell (K-2)}{2} - \underset{k=r-\ell+1}{\overset{D_{m^*}}{\sum}}\frac{\langle X\beta^*,u_k \rangle^2}{\sigma^2}\big)\bigg)\Bigg) \notag \\
    &\leq \underset{\ell \in \{r-D_{m^*}+1,\cdots,r\}}{\min}\bigg( e^{-\frac{\ell}{4} \Bigg(1-\sqrt{K-1 - \frac{2}{\ell} \underset{k=r-\ell+1}{\overset{D_{m^*}}{\sum}}\frac{\langle X\beta^*,u_k \rangle^2}{\sigma^2}}\Bigg)^2} \bigg) \notag \\
    & \leq e^{\frac{1}{2} \underset{k=1}{\overset{D_{m^*}}{\sum}}\frac{\langle X\beta^*,u_k \rangle^2}{\sigma^2}} e^{\frac{r}{2} \sqrt{K-1-\frac{2}{r} \underset{k=1}{\overset{D_{m^*}}{\sum}}\frac{\langle X\beta^*,u_k \rangle^2}{\sigma^2}}} e^{-\frac{rK}{4}}.
    \label{CV_5}
\end{align}
\textit{(*)} come from the fact that a minimum into a set is smaller than any value in the set. We choose the value corresponding for $\ell = 0$. \\
So, from~(\ref{CV_2}),~(\ref{CV_4}) and~(\ref{CV_5}), we obtain for each $r\in\{D_{m^*}+1,\cdots,q\}$ and for all $K>1$ respecting $ \ \sigma^2(K-1)>\frac{2}{r-D_{m^*}+1} \underset{k=1}{\overset{D_{m^*}}{\sum}}\langle X\beta^*,u_k \rangle^2 +2$~: 
\begin{align}
    \text{FDR}(\Hat{m}(K)) & \leq \sum\limits_{r = D_{m^*}+1}^q \Bigg( \frac{r-D_{m^*}}{r} \min \bigg( e^{\frac{(r-D_{m^*})\sqrt{2K-1}}{2}} e^{-\frac{(r-D_{m^*}) K}{2}}, \notag \\
    &\hspace{2 cm} e^{\frac{1}{2} \underset{k=1}{\overset{D_{m^*}}{\sum}}\frac{\langle X\beta^*,u_k \rangle^2}{\sigma^2}} e^{\frac{r}{2} \sqrt{K-1-\frac{2}{r} \underset{k=1}{\overset{D_{m^*}}{\sum}}\frac{\langle X\beta^*,u_k \rangle^2}{\sigma^2}}} e^{-\frac{rK}{4}} \bigg) \Bigg) \notag \\
    & = \min \Bigg( \sum\limits_{r = D_{m^*}+1}^q \bigg( \frac{r-D_{m^*}}{r} e^{\frac{(r-D_{m^*})\sqrt{2K-1}}{2}} e^{-\frac{(r-D_{m^*}) K}{2}}\bigg),\notag \\
    & \sum\limits_{r = D_{m^*}+1}^q \bigg( \frac{r-D_{m^*}}{r} e^{\frac{1}{2} \underset{k=1}{\overset{D_{m^*}}{\sum}}\frac{\langle X\beta^*,u_k \rangle^2}{\sigma^2}} e^{\frac{r}{2} \sqrt{K-1-\frac{2}{r} \underset{k=1}{\overset{D_{m^*}}{\sum}}\frac{\langle X\beta^*,u_k \rangle^2}{\sigma^2}}} e^{-\frac{rK}{4}} \bigg) \Bigg) \notag \\
    & \leq \min \Bigg( e^{-\frac{K}{2}} \sum\limits_{r = D_{m^*}+1}^q \Bigg( \frac{r-D_{m^*}}{r} e^{\frac{(r-D_{m^*})\sqrt{2K-1}}{2}} \Bigg),\notag \\
    \sum\limits_{r = D_{m^*}+1}^q \bigg( &\frac{r-D_{m^*}}{r} e^{\frac{r}{2} \sqrt{K-1-\frac{2}{r} \underset{k=1}{\overset{D_{m^*}}{\sum}}\frac{\langle X\beta^*,u_k \rangle^2}{\sigma^2}}}\bigg) \ \ e^{-\big( \frac{(D_{m^*}+1)K}{4} - \frac{1}{2\sigma^2}  \underset{k=1}{\overset{D_{m^*}}{\sum}}\langle X\beta^*,u_k \rangle^2\big)} \Bigg).
    \label{CV_7}
\end{align}
For all $\eta>0$ and $r\in\{D_{m^*}+1,\cdots,q\}$, $e^{\frac{(r-D_{m^*})\sqrt{2K-1}}{2}} = \underset{K \longrightarrow + \infty}{o}\Big(e^{\eta K}\Big)$, independently of the value of $\sigma^2$. Hence, the first term in~(\ref{CV_7}) is $o\Big(e^{-K(\frac{1}{2}-\eta)}\Big), \forall \eta>0$ when $K \longrightarrow + \infty$ and $\sigma \longrightarrow 0 \ \text{with} \ \frac{1}{\sigma} = \underset{\sigma \longrightarrow 0}{o}(\sqrt{K})$. \\
For all $r\in\{D_{m^*}+1,\cdots,q\}$, $e^{\frac{r}{2} \sqrt{K-1-\frac{2}{r} \underset{k=1}{\overset{D_{m^*}}{\sum}}\frac{\langle X\beta^*,u_k \rangle^2}{\sigma^2}}} \leq e^{\frac{r}{2}\sqrt{K}}$. Moreover, for all $\Tilde{\eta} >0$ and $r\in\{D_{m^*}+1,\cdots,q\}$, $e^{\frac{r}{2}\sqrt{K}} = \underset{K \longrightarrow + \infty}{o}\Big(e^{\Tilde{\eta} K}\Big)$, independently of the value of $\sigma^2$. Hence, the second term in~(\ref{CV_7}) is $o\bigg(e^{- \Big(K\frac{(D_{m^*}+1-\Tilde{\eta})}{4} - \frac{1}{2\sigma^2} \underset{k=1}{\overset{D_{m^*}}{\sum}}\langle X\beta^*,u_k \rangle^2\Big)}\bigg)$, $\forall \Tilde{\eta}>0$ when $K \longrightarrow + \infty$ and $\sigma \longrightarrow 0 \ \text{with} \ \frac{1}{\sigma} = \underset{\sigma \longrightarrow 0}{o}(\sqrt{K})$. \\
Hence, 
\begin{align*}
    \text{FDR}(\Hat{m}(K)) &\leq \min \Bigg( o\Big(e^{-K(\frac{1}{2}-\eta)}\Big), o\bigg(e^{- \Big(K\frac{(D_{m^*}+1-\Tilde{\eta})}{4} - \frac{1}{2\sigma^2} \underset{k=1}{\overset{D_{m^*}}{\sum}}\langle X\beta^*,u_k \rangle^2\Big)}\bigg) \Bigg) \\
    &= o\bigg(e^{- \Big(K\frac{(D_{m^*}+1-\Tilde{\eta})}{4} - \frac{1}{2\sigma^2} \underset{k=1}{\overset{D_{m^*}}{\sum}}\langle X\beta^*,u_k \rangle^2\Big)}\bigg).
\end{align*}
$\forall (\eta,\Tilde{\eta})>0$ when $K \longrightarrow + \infty$ and $\sigma \longrightarrow 0 \ \text{with} \ \frac{1}{\sigma} = \underset{\sigma \longrightarrow 0}{o}(\sqrt{K})$; which allows us to obtain~(\ref{CV_rate_K_infinity_more_complex}). 

\vspace{1 cm}
- \textbf{Lower bound on $\underline{f}_r$}~: \\
From~(\ref{CV_3}) and~(\ref{CV_1}), we obtain~:
\begin{align*}
    \forall K > \Tilde{L}_{C_1}, \ \text{FDR}(\Hat{m}(K)) & \geq C_1 \sum\limits_{r = D_{m^*}+1}^q \Bigg(\frac{r-D_{m^*}}{r} \frac{2\sqrt{2}}{\sqrt{\pi}\Big(\sqrt{rK}+\sqrt{rK+4}\Big)} e^{-\frac{rK}{2}} \Bigg) \notag \\
    & \geq C_1 \frac{2\sqrt{2}}{\sqrt{\pi}\Big(\sqrt{qK}+\sqrt{qK+4}\Big)} \frac{1}{D_{m^*}+1} \sum\limits_{r = D_{m^*}+1}^q \Bigg( e^{-\frac{rK}{2}} \Bigg) \notag \\
    & \underset{\textit{(*)}}{\geq} C_1 \frac{2\sqrt{2}}{\sqrt{\pi}\Big(\sqrt{qK}+\sqrt{qK+4}\Big)} \frac{1}{D_{m^*}+1} e^{-\frac{(D_{m^*}+1)K}{2}} \\
    & =  \frac{2 \sqrt{2} C_1}{\sqrt{\pi}(D_{m^*}+1)} \frac{1}{\sqrt{qK}+\sqrt{qK+4}} e^{-K\frac{(D_{m^*}+1)}{2}}.
\end{align*}
\textit{(*)} is true since each term in the sum is positive, so, the sum is larger than one of them. \\
For all $\eta>0$, $\exists \Tilde{C}_\eta>0$, $\exists \Tilde{L}_\eta >0$ such that $\forall K > \Tilde{L}_\eta$, we have $\Tilde{C}_\eta e^{-\eta K} \leq \frac{1}{\sqrt{qK}+\sqrt{qK+4}}$. \\
So, 
\begin{align*}
    \forall \eta >0, \ \exists \Tilde{C}_\eta>0, \ &\exists \Tilde{L}_\eta >0, \ \forall K > \max\Big(\Tilde{L}_{C_1},\Tilde{L}_\eta\Big), \\
    &\text{FDR}(\Hat{m}(K)) \geq  \frac{2 \sqrt{2} C_1}{\sqrt{\pi}(D_{m^*}+1)} \Tilde{C}_\eta e^{-K\Big(\frac{D_{m^*}+1+2\eta}{2}\Big)},
\end{align*}
which gives~(\ref{LB_rate_K_infinity}) with $C_\eta = \frac{2 \sqrt{2} C_1}{\sqrt{\pi}(D_{m^*}+1)} \Tilde{C}_\eta$ and $L_\eta = \max\Big(\Tilde{L}_{C_1},\Tilde{L}_\eta\Big)$. \\

\vspace{0.3 cm}
Formula~(\ref{large_deviation}) automatically follows from~(\ref{CV_rate_K_infinity}) and~(\ref{LB_rate_K_infinity}). 
\qed

\subsection{General bounds.}
\label{proof_orthogonal}
\textbf{\textit{Proof of Corollary~\ref{corollary}}.}\\
By taking $u_j = X_j, \ \forall j \in \{1,\cdots,q\}$, then $\left(X_1,\cdots,X_q,u_{q+1},\cdots,u_n\right)$ is an orthonormal basis of $\mathbb{R}^n$. Consequently, $ \forall j \in \{1,\cdots,q\}, \ \langle X\beta^*,u_j\rangle = \langle X \beta^*, X_j \rangle = \beta^*_j$, which concludes the proof. \\
\qed

\section{Description of the simulation protocol}
\label{simulation_protocol}

In this section, we first describe the data simulation and the four considered scenarios. Then, some details of experimental protocol are provided.

\paragraph{Description of the data simulation.}
Given values of $n$ and $p$, we simulate $Y \sim \mathcal{N}(\beta^*,I_n)$ where $\beta^*$ is a vector satisfying $\beta^*_j \geq \beta^*_{j+1}$ for all $j \in \{1,\cdots,D_{m^*-1}\}$ to get ordered active variables. 
We consider four scenarios, described in Table~\ref{Table_scenario}, where values of $D_{m^*}$, $\beta^*$, $n$ and $\sigma^2$ vary and where the number of variables $p$ is always equal to $50$.

\begin{table}[!b]
\tabcolsep=9pt
\centering
\begin{tabular}{|m{1cm}||m{1.4cm}|m{4.5cm}|m{1.5cm}|m{1.1cm}|}
 \hline
 Scenario with $p=50$ & Active variable number & Non-zero coefficient amplitude in $\beta^*$ & Number of observations & Noise \newline amplitude \\
  \hline
  \hline
 (i) \newline Sparsity & $D_{m^*} \in \newline \{1,10,20\}$ & $\beta^*_{D_{m^*}} = 2$, \newline $\forall j \in \{1,\cdots,D_{m^*}-1\}$ \newline $\beta^*_j \sim \text{Unif}(\beta^*_{j+1}+0.5,\beta^*_{j+1}+1.5)$ & $n=50$ & $\sigma^2=1$ \\
  \hline
 (ii) \newline \hspace{-0.05 cm} Complexity & $D_{m^*} = 10$ & $\beta^*_{10} = 2$ with \newline $\forall j \in \{1,\cdots,9\}$ \newline $\beta^*_j \sim \text{Unif}(\beta^*_{j+1}+0.5,\beta^*_{j+1}+1.5)$ \newline \newline  $\beta^*_{10} = \frac{2}{10}$ with \newline  $\forall j \in \{1,\cdots,9\}$, \newline $\beta^*_j \sim \text{Unif}(\beta^*_{j+1}+0.05,\beta^*_{j+1}+0.15)$ \newline \newline  $\beta^*_{10} = 2$ with \newline $\forall j \in \{1,\cdots,9\}$ \newline $\beta^*_j \sim \text{Unif}(\beta^*_{j+1}+0.05,\beta^*_{j+1}+0.15)$ & $n=50$ & $\sigma^2=1$ \\
  \hline
 (iii) \newline High- \newline dimension & $D_{m^*} = 10$ & $\beta^*_{D_{m^*}} = 2$, \newline $\forall j \in \{1,\cdots,9\}$ \newline $\beta^*_j \sim \text{Unif}(\beta^*_{j+1}+0.5,\beta^*_{j+1}+1.5)$ & $n \in \newline \{30,50,300\}$ & $\sigma^2=1$ \\
  \hline
  (iv) \newline Noise & $D_{m^*} = 10$ & $\beta^*_{D_{m^*}} = 2$, \newline $\forall j \in \{1,\cdots,9\}$ \newline $\beta^*_j \sim \text{Unif}(\beta^*_{j+1}+0.5,\beta^*_{j+1}+1.5)$ & $n=50$ & $\sigma^2 \in \newline \{0.1,1,4\}$ \\
  \hline
\end{tabular}
\caption{Description of the four scenarios.
\label{Table_scenario}}
\end{table}

The scenario (i) allows us to evaluate the impact of the sparsity of the parameter $\beta^*$. The scenario (ii) allows us to evaluate how the values of the non-zero coefficients in $\beta^*$ complicate the identification of the active variables. In particular, the non-zero coefficients are close and, in the second configuration, some of them are smaller than the noise level $\sigma$. 
The scenario (iii) allows us to evaluate the behavior of our method in a high-dimensional context through the variation of the number of observations $n$, either smaller, equal or larger than the number of variables $p$. The last scenario (iv) allows us to evaluate the impact of the noise amplitude through different values of $\sigma^2$. \\
Note that for a fair comparison, the datasets where $n=30$ in scenario (iii) are inlcuded in those where $n=50$ which are included in those where $n=300$. 
Moreover, for the sake of reproducibility, the seed of the random number generator is identically fixed for each scenario. 

\paragraph{The \textit{toy data set}.}
We call the \textit{toy data set} the data set where $n=p=50$, $D_{m^*} = 10$, $\beta^*_{10} = 2$ and $\forall j \in \{1,\cdots,9\}$, $\beta^*_j \sim \text{Unif}(\beta^*_{j+1}+0.5,\beta^*_{j+1}+1.5)$. It corresponds to the reference data set in all scenarios.

\paragraph{Empirical estimations.}
For the empirical estimations, we simulate $\mathcal{D}$ a set of $1000$ data sets for each scenario. 
For each $d \in \mathcal{D}$ and for all $K>0$, the selected model $\Hat{m}^d(K)$ is obtained from $(Y^d,X^d)$. Since $m^*$ is known, the quantity $\text{FDP}(\Hat{m}^d(K))$ is calculable for each $d \in \mathcal{D}$ and the empirical estimator of $\text{FDR}(\Hat{m}(K))$ is the average of the $\text{FDP}(\Hat{m}^d(K))$. 
Concerning PR, we simulate $\Tilde{\mathcal{D}}$ a new set of $1000$ data sets for each scenario. New $\Tilde{Y}^d$ are generated on $\Tilde{\mathcal{D}}$, from the model~(\ref{regression}), and by using the $X^d$ on $\mathcal{D}$ to respect the fixed design assumption. The selected models $\Hat{m}^d(K)$ and the $\Hat{\beta}^d_{\Hat{m}(K)}$ estimators are extracted by solving~(\ref{our_model_selection}) from the training sets $(Y^d,X^d)$ on $\mathcal{D}$. The PR is evaluated from the validation sets $(\Tilde{Y}^d,X^d)$ on $\mathcal{\Tilde{D}}$ by the mean squared error~: 
\begin{equation}
\text{MSE}(\Hat{m}^d(K)) = \frac{1}{n} \underset{i=1}{\overset{n}{\sum}} \Big(\Tilde{Y}^d_{i} - \underset{j=1}{\overset{p}{\sum}} x^d_{ij} \Hat{\beta}_{\Hat{m}^d(K)_j}\Big)^2.
\label{PR_def}
\end{equation}
The empirical estimator of $\text{PR}(\Hat{m}(K))$ is the average of the $\text{MSE}(\Hat{m}^d(K))$. \\
To validate the quality of the empirical estimations, the central limit theorem is applied to get the $95\%$ asymptotic confidence intervals~: 
$$\Big[\text{FDR}(\Hat{m}(K))-1.96\frac{\Hat{\sigma}}{\sqrt{1000}},\text{FDR}(\Hat{m}(K))+1.96\frac{\Hat{\sigma}}{\sqrt{1000}}\Big]$$
and 
$$\Big[\text{PR}(\Hat{m}(K))-1.96\frac{\Hat{\sigma}}{\sqrt{1000}},\text{PR}(\Hat{m}(K))+1.96\frac{\Hat{\sigma}}{\sqrt{1000}}\Big],$$ 
where $\Hat{\sigma}$ is the unbiased empirical estimator of the standard deviation $\sigma$. Since their width do not exceed $0.011$ and $0.07$ for respectively the FDR and the PR, they are tight, meaning that the empirical estimations are close to the theoretical quantities $\text{FDR}(\Hat{m}(K))$ and $\text{PR}(\Hat{m}(K))$. \\ 

\newpage

\section*{Description of the supplementary file}
All the R scripts are available at~\url{https://github.com/PerrineLacroix/Trade_off_FDR_PR}.

Details about the estimation of the theoretical bounds of the FDR in Theorem~\ref{theorem}, graphs for the bounds $B(K,\Hat{\beta}_{\Hat{m}(\Tilde{K})},\Hat{\sigma}^2)$ applied on the $4$ scenarios described in Section~\ref{simulation_protocol}, as well as the study of the robustness of variable ordering, of the construction of random model collections and of the comparison of our algorithm with other variable selection procedures, are provided in a supplementary file. It is complementary to Section~\ref{trade_off_fixed_collection}. It is available in \cite{lacroix:hal-04625023}.


\section*{Acknowledgments}
This research is supported in part by a public grant as part of the Investissement d’avenir project, reference ANR-11-LABX-0056-LMH, LabEx LMH. IPS2 benefits from the support of the LabEx Saclay Plant Sciences-SPS (ANR-17-EUR-0007). \\
The authors warmly thank and are grateful to Pascal Massart (Laboratoire de Mathématiques d'Orsay, Université Paris-Saclay) for helpful discussions and valuable comments. 
The authors would like to sincerely thank the Editor and the two anonymous referees for their valuable comments, suggestions and feedbacks which improved the paper.

\newpage
\bibliographystyle{unsrt}  
\bibliography{reference}  

\end{document}